\documentclass{amsart}

\usepackage{amssymb}
\usepackage{url}
\usepackage[cmtip,all]{xy}

\newtheorem{theorem}{Theorem}[section]
\newtheorem{lemma}[theorem]{Lemma}
\newtheorem{proposition}[theorem]{Proposition}
\newtheorem{corollary}[theorem]{Corollary}

\theoremstyle{definition}
\newtheorem{definition}[theorem]{Definition}

\numberwithin{equation}{section}

\newcommand{\de}{\colon}
\newcommand{\st}{\mid}
\newcommand{\C}{\mathbb{C}}
\newcommand{\N}{\mathbb{N}}
\newcommand{\Q}{\mathbb{Q}} 
\newcommand{\Ar}{\mathcal{A}}
\newcommand{\Br}{\mathcal{B}}

\DeclareMathOperator{\Poin}{Poin}

\DeclareMathOperator{\codim}{codim}
\DeclareMathOperator{\sgn}{sgn}
\renewcommand{\phi}{\varphi}

\DeclareMathOperator{\id}{id}
\newcommand{\equivr}{\sim}

\title[Long exact sequence for deleted and restricted arrangements]{A long exact sequence in cohomology for deleted and restricted subspaces arrangements}

\author{Gery Debongnie}
\address{UCL, Departement de mathematique \\ Chemin du Cyclotron, 2 \\ 
B-1348 Louvain-la-neuve \\ Belgium}
\email{debongnie@math.ucl.ac.be}
\thanks{The author is an ``Aspirant'' of the ``Fonds National pour la 
Recherche Scientifique'' (FNRS), Belgium.}

\subjclass[2000]{Primary 55P62}

\begin{document}

\begin{abstract}
The notions of \emph{deleted} and \emph{restricted} arrangements have been very useful in the study of arrangements of hyperplanes. Let $\Ar$ be an arrangement of hyperplanes and $x \in \Ar$. The deleted and restricted arrangements $\Ar'$ and $\Ar''$ allows us to compute recursively the Poincar\'e polynomials of the complement space $M(\Ar)$ with the following formula~: \[
\Poin(M(\Ar), t) = \Poin(M(\Ar'), t) + t \Poin(M(\Ar''), t).
\]
In this paper, we consider the extension of this formula to arbitrary subspaces arrangements. The main result is the existence of a long exact sequence connecting the rational cohomology of $M(\Ar)$, $M(\Ar')$ and $M(\Ar'')$. Using this sequence, we obtain new results connecting the Betti numbers and Poincar\'e polynomials of deleted and restricted arrangements.
\end{abstract}

\maketitle

\section{Introduction}
An arrangement of subspaces is a finite set $\Ar = \{x_0, \dotsc, x_n\}$ of vector subspaces in $\C^l$. Unless said otherwise, we'll always suppose that $\Ar$ is a complex arrangement and that there is no $x \neq y$ in $\Ar$ such that $x \subset y$. We consider the poset of intersections of elements of $\Ar$ ordered by reverse inclusion.  We define two operations on $L(\Ar)$~: the meet $x \wedge y = \cap\{z \in L(\Ar) \st x \cup y \subset z\}$ and the join $x \vee y = x \cap y$. These two operations endow $L(\Ar)$ with a lattice structure.  Our main interest is the study of the complement space $M(\Ar) = \C^l \setminus \cup_{x \in \Ar} x$.

When all the subspaces $x \in \Ar$ are hyperplanes ($\codim x = 1$), then $\Ar$ is an arrangement of hyperplanes.  If $\Ar$ is such an arrangement and $x \in \Ar$, then we can consider the arrangements $\Ar' = \Ar \setminus \{x\}$ and $\Ar'' = \{x \cap y \st y \in \Ar''\}$. The triple $(\Ar, \Ar', \Ar'')$ has been studied (a good reference is the book of P. Orlik and H. Terao~\cite{orl92}), and possesses some interesting properties.  For example, there is a relationship between the Poincar\'e polynomials of $M(\Ar), M(\Ar')$ and $M(\Ar'')$~: \[
\Poin(M(\Ar), t) = \Poin(M(\Ar'), t) + t \Poin(M(\Ar''), t).
\]

The goal of this paper is to generalize that property to arbitrary subspaces arrangements. But before doing that, we need to define what are the deleted and restricted arrangements $\Ar'$ and $\Ar''$ in the general case.  The deleted arrangement is obvious, but there is a slight difficulty when we want to extend restricted arrangements. Section~\ref{sec:drsa} contains a proper definition of $\Ar''$. It also gives a description of a simple rational model $D(\Ar)$ of $M(\Ar)$.

Let $D(\Ar), D(\Ar')$ and $D(\Ar'')$ be the rational models of $M(\Ar)$, $M(\Ar')$ and $M(\Ar'')$, as defined in section~\ref{sec:drsa}.  From the definition, there exists an injective map (the inclusion) $j \de D(\Ar') \to D(\Ar)$. This map induces a short exact sequence of cochains complexes \[
0 \to D(\Ar') \stackrel{j}{\to} D(\Ar) \to  \frac{D(\Ar)}{D(\Ar')} \to 0.
\]

In section~\ref{sec:quasiiso}, we will show that there is a c.d.g.a. $D(\widetilde{\Ar''})$ and quasi-isomorphisms \[
\frac{D(\Ar)}{D(\Ar')} \stackrel{\simeq}{\to} D(\widetilde{\Ar''}) \stackrel{\simeq}{\leftarrow} D(\Ar'').
\]

Together, these results give us a long exact sequence connecting the cohomology of $M(\Ar)$, $M(\Ar')$ and $M(\Ar'')$, which is studied in section~\ref{sec:drt} (see theorem~\ref{theo:lesic}). A few consequences are studied as well. For example, the Euler characteristic of the complement space of a complex subspaces arrangement is always 0.  Under some conditions, this long exact sequence allows us to generalize the formula about the Poincar\'e polynomials to \[
\Poin(M(\Ar), t) = \Poin(M(\Ar'), t) + t^{\deg(x)}\Poin(M(\Ar''), t).
\]

Finally, section~\ref{sec:agl} gives stronger results for the specific case of subspaces arrangements with a geometric lattice.

\section{Rational models and restricted subspaces arrangements} \label{sec:drsa}

In~\cite{yu05}, S. Yuzvinsky and E. Feichtner described a
simple rational model for the topological space $M(\Ar)$ of arbitrary subspaces arrangements.  This model is a graded differential algebra denoted by $D(\Ar)$. 

Let $\Ar = \{x_0, \dotsc, x_n\}$ be a subspaces arrangement. A rational model for the space $M(\Ar)$ is given as follows :  choose a linear  order on $\Ar$ (in this paper, we'll use $x_0 < x_1 < \dotso < x_n$).  The graded vector space $D(\Ar)$ has a basis given by  all
subsets $\sigma \subseteq \Ar$.  For $\sigma = \{x_{i_1}, \dotsc, x_{i_r}\}$, we
define a differential by the formula \[
d\sigma = \sum_{j:\vee(\sigma \setminus \{x_{i_j}\}) = \vee \sigma} (-1)^j
(\sigma\setminus\{x_{i_j}\})
\]
where the indexing of the elements  in $\sigma$ follows the linear order
imposed on $\Ar$. With $\deg(\sigma) = 2 \codim \vee \sigma - |\sigma|$,
$D(\Ar)$  is a  cochain complex.   Finally,  the multiplication is defined, for $\sigma, \tau \subseteq \Ar$, by
\[
\sigma \cdot \tau = \left\{\begin{aligned} (-1)^{\sgn\epsilon(\sigma, 
\tau)} \sigma \cup \tau &\qquad \text{ if } \codim \vee \sigma + \codim \vee 
\tau = \codim \vee(\sigma \cup \tau), \\ 0 & \qquad\text{ otherwise,} 
\end{aligned}\right.
\]
where  $\epsilon(\sigma, \tau)$ is the  permutation  that, applied  to $
\sigma \cup  \tau$ with the  induced linear  order, places  elements  of
$\tau$  after elements  of $\sigma$, both  in the induced  linear order.

Since we are interested in deleted and restricted subspaces arrangements, the rest of this section will be used to define such arrangements and to better understand their models. Let $\Ar$ be a (non-empty) subspaces arrangements and $x_0 \in \Ar$. Obviously, the definition of the \emph{deleted arrangement} is $\Ar' = \Ar \setminus \{x_0\}$.  Its model $D(\Ar')$ is a subalgebra of $D(\Ar)$ (the inclusion is an injective map). 

However, the restricted arrangement is a little trickier to define for subspaces arrangements. Let $\widetilde{\Ar''} = \{ x_0 \cap y \st y \in \Ar' \}$.  The difficulty lies in the fact that $\widetilde{\Ar''}$ might contain two distinct subspaces $x_0 \cap y_1$ and $x_0 \cap y_2$ such that $x_0 \cap y_1 \subsetneqq x_0 \cap y_2$. Since we are mainly interested in the complement space, it makes sense to define the \emph{restricted arrangement} (in $x_0$) by \[
\Ar'' = \widetilde{\Ar''} \setminus \{ x_0 \cap y \st y \in \Ar' \text{ and } \exists y' \in \Ar' \text{ with } y \neq y' \text{ and } x_0 \cap y \subset x_0 \cap y'  \}.
\]

It makes sense to consider the graded differential algebra $D(\widetilde{\Ar''})$, even if $\widetilde{\Ar''}$ is not a proper subspaces arrangement.  Later in this section, the corollary~\ref{cor:artilde} states that the natural inclusion $D(\Ar'') \hookrightarrow D(\widetilde{\Ar''})$ is a quasi-isomorphism.

The key to this corollary~\ref{cor:artilde} is contained in the next proposition. 
\begin{proposition} \label{prop:subar}
Suppose $\Br$ is a subspaces arrangement with two subspaces $u$ and $v$ such that $u \subset v$. Let $\Br' = \Br \setminus \{u \}$. Then the inclusion $D(\Br') \hookrightarrow D(\Br)$ is a quasi-isomorphism.
\end{proposition}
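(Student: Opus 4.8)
The plan is to exhibit an explicit filtration (or a mapping-cone/algebraic-homotopy) argument that shows the quotient complex $D(\Br)/D(\Br')$ is acyclic, which by the long exact sequence in cohomology gives the result. First I would observe that a basis for $D(\Br)/D(\Br')$ is given by the subsets $\sigma \subseteq \Br$ with $u \in \sigma$; write each such $\sigma$ uniquely as $\{u\} \cup \tau$ with $\tau \subseteq \Br'$. The key structural fact to extract from $u \subset v$ is that $\vee(\sigma) = \vee(\sigma \cup \{v\})$ and, more importantly, that adding or removing $u$ from a set that already contains $v$ does not change the join: $\vee(\tau \cup \{u\}) = \vee(\tau)$ whenever $v \in \tau$. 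This is because $u \subset v$ forces $u$ to lie in every intersection that already contains $v$, so $u$ is ``redundant'' in the lattice-theoretic sense precisely on the subsets containing $v$.

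Next I would split the basis of $D(\Br)/D(\Br')$ into two parts according to whether $v \in \sigma$ or not, and define a degree $-1$ map $h$ on the quotient complex by $h(\sigma) = \pm(\sigma \cup \{v\})$ when $v \notin \sigma$ (with an appropriate sign coming from $\epsilon$ or the position of $v$ in the linear order) and $h(\sigma) = 0$ when $v \in \sigma$. The claim is that $h$ is a contracting homotopy, i.e.\ $dh + hd = \pm\id$ on $D(\Br)/D(\Br')$, possibly after checking that $\codim \vee(\sigma \cup \{v\}) = \codim \vee(\sigma)$ so that the relevant terms of the differential actually appear (the condition $\vee(\sigma \setminus \{x_{i_j}\}) = \vee\sigma$ in the definition of $d$ is exactly what needs to be tracked). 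Carrying this out amounts to pairing, for each $\sigma$ not containing $v$, the term $\sigma$ in $hd$ coming from deleting $v$ in $d(\sigma \cup \{v\})$ against the term $\sigma$ in $dh$, and checking that all other terms cancel in pairs between $dh(\sigma)$ and $hd(\sigma)$; the join condition above guarantees the combinatorics match up.

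Alternatively, and perhaps more cleanly, I would set up an increasing filtration of $D(\Br)$ (equivalently of the quotient) indexed by something like the number of elements of $\sigma$ below $v$, or simply run the standard acyclic-models style argument: decompose the quotient complex as a direct sum, over subsets $\rho \subseteq \Br' \setminus \{y : y \supseteq v \text{ forces nothing}\}$, of two-term complexes $\Q\langle \{u\}\cup\rho\rangle \to \Q\langle\{u,v\}\cup\rho\rangle$ with the differential an isomorphism, so each summand is acyclic and hence so is the total complex. The main obstacle I anticipate is bookkeeping the signs (the $(-1)^j$ in $d$ and the sign of $\epsilon$ in the product) consistently so that the homotopy identity holds on the nose rather than up to sign, and making sure the join/codimension side conditions in the definition of $d$ are invariant under the operation $\sigma \mapsto \sigma \cup \{v\}$ exactly when needed — this is where the hypothesis $u \subset v$ does all the work and must be invoked carefully. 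Once acyclicity of $D(\Br)/D(\Br')$ is established, the long exact sequence of the short exact sequence $0 \to D(\Br') \to D(\Br) \to D(\Br)/D(\Br') \to 0$ immediately yields that $D(\Br') \hookrightarrow D(\Br)$ is a quasi-isomorphism.
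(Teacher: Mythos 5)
Your overall plan --- reduce to acyclicity of $D(\Br)/D(\Br')$ via the long exact sequence, and establish acyclicity by pairing each $\sigma$ (necessarily containing $u$) with $\sigma \cup \{v\}$ --- is the same idea the paper uses. The paper implements it with a two-row filtration and its spectral sequence, whose $d_0$ sends $\{u,v\}\cup\rho \mapsto \{u\}\cup\rho$ and is an isomorphism, so $E^1 = 0$; you propose an explicit contracting homotopy $h(\sigma) = \pm(\sigma\cup\{v\})$. These are essentially the same argument in two dialects, and your homotopy version can be made to work, provided the sign is chosen consistently (placing $v$ in its correct slot in the linear order and tracking the $(-1)^j$). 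That said, two of your supporting statements need correction.

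First, the ``key structural fact'' you extract from $u \subset v$ is stated backwards. Since the join in $L(\Br)$ is intersection, what $u\subset v$ gives you is that adding $v$ to a set that already contains $u$ does not change the join: $\vee(\tau\cup\{v\}) = \vee(\tau)$ whenever $u\in\tau$, because $\cap\tau \subseteq u \subseteq v$. The claim you actually wrote --- $\vee(\tau\cup\{u\}) = \vee(\tau)$ whenever $v\in\tau$ --- is false in general (take $\tau=\{v\}$ with $u\subsetneq v$), and the justification that ``$u$ lies in every intersection already containing $v$'' is also false (it is $v$ that contains everything inside $u$). The correct version is exactly what the argument needs: every basis element of $D(\Br)/D(\Br')$ contains $u$, so the join is insensitive to $v$, and hence the index set $\{j : \vee(\sigma\setminus\{x_{i_j}\}) = \vee\sigma\}$ governing the differential is identical for $\sigma$ and $\sigma\cup\{v\}$, and the term $\sigma$ always appears in $d(\sigma\cup\{v\})$.

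Second, the ``cleaner alternative'' you float --- that the quotient literally splits as a direct sum of two-term complexes $\Q\langle\{u\}\cup\rho\rangle \to \Q\langle\{u,v\}\cup\rho\rangle$ indexed by $\rho$ --- does not hold. The differential of $\{u,v\}\cup\rho$ contains, besides $\pm\{u\}\cup\rho$, terms of the form $\pm\{u,v\}\cup(\rho\setminus\{x\})$, which land in summands indexed by a smaller $\rho$, so $d$ does not preserve your proposed decomposition. This is precisely what the paper's filtration by $|\rho|$ is designed to fix: on the associated graded these cross-terms lower the filtration degree and therefore vanish, leaving exactly the two-term isomorphisms on the $E^0$ page. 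So the filtration is not merely an alternative phrasing but the correction that turns your second idea into a proof.
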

\begin{proof}
The injective map $D(\Br') \to D(\Br)$ gives us the following short exact sequence of cochain complexes~: \[
0 \to D(\Br') \to D(\Br) \to \frac{D(\Br)}{D(\Br')} \to 0.
\]
Since there is a long exact sequence in cohomology, it is sufficient to prove that $H^\star(D(\Br)/D(\Br')) = 0$. The cochain complexes $D(\Br)/D(\Br')$ is a vector space generated by all the elements of the form
\begin{itemize}
\item $\sigma = \{u, x_{i_1}, \dotsc, x_{i_r}\}$ with $x_{i_j} \not \in \{u,v\}$. The differential $d\sigma$ is a sum of some $ \pm \{u, x_{i_1}, \dotsc, \widehat{x_{i_j}}, \dotsc, x_{i_r}\}$.
\item $\sigma = \{u, v, x_{i_1}, \dotsc, x_{i_r}\}$ with $x_{i_j} \not \in \{u, v\}$. The differential $d\sigma$ is a sum of $\{u, x_{i_1}, \dotsc, x_{i_r}\}$ and some elements $\pm \{u,v, \dotsc, \widehat{x_{i_j}}, \dotsc, x_{i_r}\}$.
\end{itemize}

 Let's introduce a new grading on $D(\Br)/D(\Br')$. We say that $\deg\{u, x_{i_1}, \dotsc, x_{i_r}\} = r$ and $\deg\{u,v, x_{i_1}, \dotsc, x_{i_r}\} = r + 1$. The differential decreases this degree by one, and we have a chain complex. 

Let's introduce a filtration on $D(\Br)/D(\Br')$~: $F_0$ is the subcomplex generated by $\{u\}$ and $\{u,v\}$, and in general, $F_r$ is generated by all the $\{u, x_{i_1}, \dotsc, x_{i_s}\}$ and $\{u,v, x_{i_1}, \dotsc, x_{i_s}\}$ with $s \leq r$.  So, we have an increasing filtration $F_0 \subseteq F_1 \subseteq \dotso \subseteq F_r \subseteq \dotso$ Let's define the filtration degree $d_0$ of $\sigma$ by the number of $x_{i_j}$ in $\sigma$, and the complementary degree is 0 if $v \not \in \sigma$ and $1$ if $v \in \sigma$. This gives us a spectral sequence. The page $E^0$ is~:
\begin{itemize}
\item $E^0_{p, 0}$ is a vector subspace generated by all the $\{u, x_{i_1}, \dotsc, x_{i_p}\}$.
\item $E^0_{p, 1}$ is generated by all the $\{u,v, x_{i_1}, \dotsc, x_{i_p}\}$.
\item $E^0_{p, q}$ is zero if $q \geq 2$.
\end{itemize}
The differential $d_0\de E^0_{p,q} \to E^0_{p, q-1}$ is zero for every $q \neq 1$. For $q = 1$, $d_0 \de E^0_{p, 1} \to E^0_{p, 0}$ maps $\{u,v, x_{i_1}, \dotsc, x_{i_p}\}$ to $\{u, x_{i_1}, \dotsc, x_{i_p}\}$. So, it is obvious that every spaces $E^1_{p,q}$ of the next page is simply zero.  Therefore, there is no homology in $D(\Br)/D(\Br')$, which completes the proof.
\end{proof}

\begin{corollary} \label{cor:artilde}
Let $\Ar$ be a subspaces arrangement. Then the inclusion $D(\Ar'') \hookrightarrow D(\widetilde{\Ar''})$ is a quasi-isomorphism.
\end{corollary}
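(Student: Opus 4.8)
The plan is to realize the natural inclusion $D(\Ar'') \hookrightarrow D(\widetilde{\Ar''})$ as a finite composite of inclusions, each of which is covered by Proposition~\ref{prop:subar}.

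First I would unravel the set $\widetilde{\Ar''} \setminus \Ar''$. By the very definition of $\Ar''$, an element $u \in \widetilde{\Ar''}$ lies in this difference precisely when $u$ fails to be maximal in $\widetilde{\Ar''}$ for the ordering by inclusion; equivalently, $\Ar''$ is exactly the set of $\subseteq$-maximal elements of $\widetilde{\Ar''}$. Since $\widetilde{\Ar''}$ is a finite set of subspaces, any non-maximal $u$ sits below a maximal one, so there exists $v \in \Ar''$ with $u \subsetneq v$. Enumerate $\widetilde{\Ar''} \setminus \Ar'' = \{u_1, \dotsc, u_k\}$ in any order, and for each $i$ fix a witness $v_i \in \Ar''$ with $u_i \subsetneq v_i$.

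Next I would build the descending chain of arrangements $\Br_0 = \widetilde{\Ar''} \supseteq \Br_1 \supseteq \dotsb \supseteq \Br_k = \Ar''$, where $\Br_i = \Br_{i-1} \setminus \{u_i\}$. At the $i$-th step the subspace $u_i$ belongs to $\Br_{i-1}$ (it has not yet been removed) and its witness $v_i$ also belongs to $\Br_{i-1}$, because $v_i \in \Ar'' = \Br_k \subseteq \Br_{i-1}$ is never deleted. Hence the pair $(u_i, v_i)$ satisfies the hypotheses of Proposition~\ref{prop:subar} for the arrangement $\Br_{i-1}$ --- note that although $\Br_{i-1}$ may violate the no-containment convention, the model $D(\Br_{i-1})$ is still defined and the proof of Proposition~\ref{prop:subar} nowhere uses that convention --- so the inclusion $D(\Br_i) \hookrightarrow D(\Br_{i-1})$ is a quasi-isomorphism. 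Composing these $k$ quasi-isomorphisms shows that the natural inclusion $D(\Ar'') = D(\Br_k) \hookrightarrow D(\Br_0) = D(\widetilde{\Ar''})$ is a quasi-isomorphism, which is the assertion.

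I do not expect a genuine obstacle here: all the homological content is in Proposition~\ref{prop:subar}, and the corollary is a bookkeeping argument. The one point that needs a little care is ensuring, at each deletion step, that the larger subspace $v_i$ invoked in the proposition is still present; this is precisely why the witnesses $v_i$ are chosen inside $\Ar''$ rather than merely inside $\widetilde{\Ar''}$, and why the order in which the $u_i$ are removed turns out to be irrelevant.
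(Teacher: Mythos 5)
Your proposal is correct and is exactly the elaboration the paper intends by its one-line proof "It is a consequence of the definitions and proposition~\ref{prop:subar}": iterate Proposition~\ref{prop:subar} to peel off the non-maximal subspaces of $\widetilde{\Ar''}$ one at a time, keeping a maximal witness in $\Ar''$ available at every step. The observations you single out --- that $\Ar''$ is precisely the set of $\subseteq$-maximal elements of $\widetilde{\Ar''}$ and that Proposition~\ref{prop:subar} never uses the no-containment convention --- are the right bookkeeping points, and the argument goes through as stated.
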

\begin{proof}
It is a consequence of the definitions and proposition~\ref{prop:subar}.
\end{proof}

\section{A quasi-isomorphism connecting $D(\Ar)/D(\Ar')$ and $D(\widetilde{\Ar''})$} \label{sec:quasiiso}

The goal of this section will be to show the existence of a quasi-isomorphism $\frac{D(\Ar)}{D(\Ar')} \to D(\widetilde{\Ar''})$.  This will give us a long exact sequence connecting the rational cohomology of $M(\Ar), M(\Ar')$ and $M(\Ar'')$.

But before that, we need to define a few things. There is an equivalence relation in $\Ar'$ : $x_i \equivr x_j$ if and only if $x_0 \cap x_i = x_0 \cap x_j$. This equivalence relation gives a partition of $\Ar' = \Ar_1 \cup \Ar_2 \cup \dotso \cup \Ar_r$. In order to get the signs right, the linear order on the elements of $\Ar$ is chosen with the property that if $i < j$, $y \in \Ar_i$ and $z \in \Ar_j$, then $y < z$.

Let $E = \{ (y,z) \in \Ar' \times \Ar' \st y \equivr z \text{ and } y \neq z \}$. Let's define the map $\phi \de D(\Ar)\to D(\widetilde{\Ar''})$. Let $\sigma \subseteq \Ar$. If $x_0 \not \in \sigma$ or if $x_0 \in \sigma$ and there exists $(y,z) \in E$ such that $\{y,z\} \in \sigma$, then $\phi(\sigma) = 0$. Otherwise, $\sigma = \{x_0, x_{i_1}, \dotsc, x_{i_r}\} \subseteq \Ar$ is sent to $\phi(\sigma) = (-1)^r \{x_0 \cap x_{i_1}, \dotsc, x_0 \cap x_{i_r} \} \subseteq \widetilde{\Ar''}$ (in particular, $\phi(\{x_0\}) = \emptyset$). This map is not multiplicative but it commutes with the differential.

\begin{lemma}
The map $\phi$ is such that $\phi d = d \phi$.
\end{lemma}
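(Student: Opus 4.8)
The plan is to verify $\phi d = d\phi$ by a direct computation on basis elements $\sigma \subseteq \Ar$, splitting into cases according to whether $x_0 \in \sigma$ and whether $\sigma$ contains an $E$-pair. First, suppose $x_0 \notin \sigma$. Then $\phi(\sigma) = 0$, so $d\phi(\sigma) = 0$; and every term $\sigma \setminus \{x_{i_j}\}$ of $d\sigma$ still omits $x_0$, so $\phi$ kills each of them and $\phi d(\sigma) = 0$ as well. Next, suppose $x_0 \in \sigma$ but $\sigma$ contains some pair $\{y,z\}$ with $(y,z) \in E$. Then $\phi(\sigma) = 0$. In $d\sigma = \sum_j (-1)^j (\sigma \setminus \{x_{i_j}\})$, any term that still contains both $y$ and $z$ (and $x_0$) is killed by $\phi$; the only terms that could survive are those removing $y$ or removing $z$. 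But removing $y$ from $\sigma$ leaves a set containing $x_0$ and $z$ with $x_0 \cap z = x_0 \cap y$ — I must check that either this term has coefficient zero in $d\sigma$ (i.e. $\vee(\sigma \setminus \{y\}) \ne \vee\sigma$) or, more robustly, that the contributions from removing $y$ and from removing $z$ cancel in pairs after applying $\phi$. Here is the key point: for the surviving terms $\phi(\sigma \setminus \{y\}) = \pm\{x_0\cap x_{i_1}, \dots\}$ and $\phi(\sigma\setminus\{z\})$ give the \emph{same} underlying subset of $\widetilde{\Ar''}$ (since $x_0\cap y = x_0 \cap z$), and a sign/index bookkeeping shows the two coefficients are opposite, so they cancel; thus $\phi d(\sigma) = 0 = d\phi(\sigma)$.

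The main case is $\sigma = \{x_0, x_{i_1}, \dots, x_{i_r}\}$ with no $E$-pair, so that $\phi(\sigma) = (-1)^r \{x_0 \cap x_{i_1}, \dots, x_0 \cap x_{i_r}\}$, and I write $\bar x_k := x_0 \cap x_{i_k}$ and $\bar\sigma := \{\bar x_{i_1}, \dots, \bar x_{i_r}\}$. On one side, $d\phi(\sigma) = (-1)^r \sum_{k :\, \vee(\bar\sigma \setminus \bar x_{i_k}) = \vee\bar\sigma} (-1)^{k}(\bar\sigma \setminus \{\bar x_{i_k}\})$ (with the index $k$ referring to position in $\bar\sigma$, which by the chosen compatible linear order matches the position of $x_{i_k}$ in $\sigma$ after $x_0$). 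On the other side, $d\sigma = \sum_{j} (-1)^j (\sigma \setminus \{x_{i_j}\})$ over those $j$ with $\vee(\sigma \setminus \{x_{i_j}\}) = \vee\sigma$; the $j=0$ term removes $x_0$ and is annihilated by $\phi$, and for $j \ge 1$, $\phi(\sigma \setminus \{x_{i_j}\}) = (-1)^{r-1}(\bar\sigma \setminus \{\bar x_{i_j}\})$ \emph{provided} $\sigma \setminus \{x_{i_j}\}$ still contains no $E$-pair — which it does, being a subset of $\sigma$. Comparing, after accounting for $(-1)^r$ versus $(-1)^{r-1}(-1)^j$ vs $(-1)^k$ (the shift by one in passing from position in $\sigma$ to position in $\bar\sigma$ accounts for the sign discrepancy), the remaining content is the claim that the index set $\{j \ge 1 : \vee(\sigma \setminus \{x_{i_j}\}) = \vee\sigma\}$ coincides with $\{k : \vee(\bar\sigma \setminus \bar x_{i_k}) = \vee\bar\sigma\}$.

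Therefore the crux is a lattice-theoretic identity: for $j \ge 1$,
\[
\vee(\sigma \setminus \{x_{i_j}\}) = \vee\sigma \iff \vee(\bar\sigma \setminus \{\bar x_{i_j}\}) = \vee\bar\sigma .
\]
Since $x_0$ is common to every intersection and $x_0 \cap x_{i_k} = \bar x_{i_k}$, one has $\vee\sigma = x_0 \cap x_{i_1} \cap \dots \cap x_{i_r} = \bigcap_k \bar x_{i_k} = \vee_{\widetilde{\Ar''}}\bar\sigma$ (using that the join in the intersection lattice is literal intersection), and likewise $\vee(\sigma \setminus \{x_{i_j}\}) = \bigcap_{k \ne j}\bar x_{i_k} = \vee_{\widetilde{\Ar''}}(\bar\sigma \setminus \{\bar x_{i_j}\})$; so the two conditions are literally the \emph{same} equation, and the equivalence is immediate. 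I expect this lattice identity together with the sign bookkeeping (the global $(-1)^r$ and the off-by-one index shift) to be the only real content; the genuine obstacle — and the step I would write out most carefully — is the cancellation argument in the $E$-pair case, making sure the two "accidentally surviving" terms from deleting $y$ versus $z$ really do pair up with opposite signs, which uses that $y$ and $z$ are adjacent-in-block under the chosen compatible linear order.
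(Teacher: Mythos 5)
Your proof is correct and takes essentially the same route as the paper: the same three-case split on whether $x_0 \in \sigma$ and whether $\sigma$ contains an $E$-pair, the same cancellation of the two terms obtained by deleting $y$ versus $z$ (which, as you note, works because the block-compatible order forces $y,z$ to be adjacent in $\sigma$ in the one nontrivial subcase), and the same observation that $\vee\sigma$ and $\vee\bar\sigma$ are literally the same subspace so the index sets in the two differentials agree. If anything you spell out the $E$-pair cancellation more carefully than the paper, which dispatches it with ``by definition of $\phi$.''
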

\begin{proof}
Let $\sigma \subseteq \Ar$. We need to check the following situations.
\begin{itemize}
\item If $x_0 \not\in \sigma$, then $d\phi \sigma = 0 = \phi d \sigma$. 
\item if $x_0 \in \sigma$ and there exists $(y,z) \in E$ with $\{y, z\} \in \sigma$, then $d \phi \sigma = 0$ and 
\begin{multline*}
d\{x_0,y,z, x_{i_1}, \dotsc, x_{i_r} \} = \alpha \left(\sigma \setminus \{x_0 \} \right) + \{x_0, z, x_{i_1}, \dotsc, x_{i_r}\} \\ - \{x_0, y, x_{i_1}, \dotsc, x_{i_r} \} + \sum_j \alpha_j \left(\sigma \setminus\{x_{i_j}\} \right), 
\end{multline*}
where $\alpha$ and the $\alpha_j$ can be $0$ or $\pm 1$. So, by definition of $\phi$, we have : $\phi d\sigma = 0$.
\item Finally, the last possibility is that $x_0 \in \sigma$ and no $(y,z) \in E$ is such that $\{y,z\} \subset \sigma$. In that case, let $\sigma = \{x_0, x_{i_1}, \dotsc, x_{i_r}\}$ and denote by $J_\sigma$ the set $\{j \st \cap_{m = 1, m \neq j}^r (x_{i_m} \cap x_0) = \cap_{m=1}^r (x_{i_m} \cap x_0) \}$. We have
\begin{align*}
\phi(d \sigma) & = \phi\left( \alpha \left(\sigma \setminus \{x_0\}\right) + \sum_{j \in J_\sigma} (-1)^{j+1}\sigma \setminus \{x_{i_j}\}\right) \\ &= (-1)^{r-1} \sum_{j \in J_\sigma} (-1)^{j+1} \{x_0 \cap x_{i_1}, \dotsc, \widehat{x_0 \cap x_{i_j}}, \dotsc, x_0 \cap x_{i_r} \}, \\
d \phi(\sigma) &= (-1)^r d\{ x_0 \cap x_{i_1}, \dotsc, x_0 \cap x_{i_r} \} \\
 &= (-1)^r \sum_{j \in J_\sigma} (-1)^j \{x_0 \cap x_{i_1}, \dotsc, \widehat{x_0 \cap x_{i_j}}, \dotsc, x_0 \cap x_{i_r} \}.
\end{align*}
\end{itemize}
So, the map $\phi$ commutes with the differential $d$.
\end{proof}

The map $\phi$ is surjective.  Obviously, it satisfies  $\phi j = 0$ and induces a surjective map \[
\bar{\phi} \de \frac{D(\Ar)}{D(\Ar')} \to D(\widetilde{\Ar''})
\]
We'll show that this map induces an isomorphism in cohomology.  If the set $E$ is empty, then $\bar{\phi}$ is injective and is an isomorphism. Otherwise, for every $(u,v) \in E$, let $I_{u,v}$ be the vector subspace of $D(\Ar)/D(\Ar')$ generated by all the elements of the form $\{x_0,u, y_{j_1}, \dotsc, y_{j_r} \} - \{x_0, v, y_{j_1}, \dotsc, y_{j_r} \}$ and $\{x_0, u, v, y_{j_1}, \dotsc, y_{j_r} \}$ with the $y_i \in \Ar \setminus \{x_0,u,v\}$.  Let $V = \sum_{(u,v) \in E} I_{u,v}$. We have the following lemmas~:
\begin{lemma} \label{lem:vkernel}
In $D(A)/D(A')$, we have $V = \ker \bar{\phi}$.
\end{lemma}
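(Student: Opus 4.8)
The plan is to prove the two inclusions $V \subseteq \ker\bar\phi$ and $\ker\bar\phi \subseteq V$ separately. For $V \subseteq \ker\bar\phi$ it suffices to evaluate $\bar\phi$ on the two families of generators of the subspaces $I_{u,v}$, $(u,v) \in E$. A generator of the form $\overline{\{x_0, u, v, y_{j_1}, \dotsc, y_{j_r}\}}$ contains the pair $\{u,v\}$ coming from an element of $E$, hence is killed by $\phi$ by definition. For a generator $\overline{\{x_0, u, y_{j_1}, \dotsc, y_{j_r}\}} - \overline{\{x_0, v, y_{j_1}, \dotsc, y_{j_r}\}}$, either one of the two subsets already contains a pair from $E$ — in which case, since $u \equivr v$, so does the other, and both terms vanish — or neither does, and then both are sent to $\pm$ the same subset of $\widetilde{\Ar''}$, because the two images of $\phi$ differ only in containing $x_0 \cap u$ versus $x_0 \cap v$, and $x_0 \cap u = x_0 \cap v$ since $u \equivr v$; so the difference maps to $0$.

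For the reverse inclusion I would argue by straightening modulo $V$. Fix for each class $\Ar_i$ its $<$-minimal element $a_i$, and call a basis element $\bar\sigma$ of $D(\Ar)/D(\Ar')$ — so $x_0 \in \sigma$, and I write $\sigma = \{x_0\} \cup \tau$ with $\tau \subseteq \Ar'$ — \emph{reduced} if $\tau \cap \Ar_i$ is either $\emptyset$ or $\{a_i\}$ for every $i$. Every basis element is congruent modulo $V$ to a combination of reduced ones: if some $\tau \cap \Ar_i$ contains two distinct elements $u,v$, then $\bar\sigma$ is itself a generator of $I_{u,v}$ of the first kind, hence $\equiv 0$; and once every class meets $\tau$ in at most one element, replacing a non-minimal $u \in \tau \cap \Ar_i$ by $a_i$ changes $\bar\sigma$ by a generator of $I_{u,a_i}$ of the second kind (the remaining elements of $\tau$ automatically avoid $x_0$, $u$ and $a_i$), and iterating over the finitely many classes terminates in a reduced element. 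Let $R$ be the span of the reduced basis elements, so that $D(\Ar)/D(\Ar') = R + V$. On the other hand $\bar\phi$ sends the reduced basis element indexed by $\tau = \{a_i : i \in S\}$ to $\pm\{x_0 \cap a_i : i \in S\}$; since the classes $\Ar_i$ are exactly the fibres of $y \mapsto x_0 \cap y$, distinct $S$ yield distinct subsets of $\widetilde{\Ar''}$, so $\bar\phi$ carries the reduced basis bijectively onto the standard basis of $D(\widetilde{\Ar''})$ and in particular $\bar\phi|_R$ is injective. Now if $w \in \ker\bar\phi$, write $w = w' + v$ with $w' \in R$ and $v \in V$; then $\bar\phi(w') = \bar\phi(w) - \bar\phi(v) = 0$ by the first inclusion, hence $w' = 0$ and $w = v \in V$.

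The genuinely delicate point is the straightening claim $D(\Ar)/D(\Ar') = R + V$: one must check that the rewriting can be carried out in the stated order — first killing every repeated class, then swapping in the minimal representatives — without the second step re-introducing a repeated class, and that at each step the element removed or replaced, together with the untouched part of $\tau$, really forms one of the prescribed generators of some $I_{u,v}$. Everything else (the sign bookkeeping, the surjectivity of $\bar\phi$ already noted above, and the fact that the argument respects the internal grading, all spaces being finite dimensional) is routine.
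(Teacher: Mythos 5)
Your proof is correct, and it takes a mildly but genuinely different route from the paper's. The paper argues element-wise: given $u = \sum \alpha_s \sigma_s \in \ker\bar\phi$, it first discards terms $\sigma_s = \{x_0, u, v, \dotsc\}$ with $u \equivr v$ (these lie in $V$), then uses the kernel condition to match each surviving $\sigma_t$ with some $\sigma_s$ having $\bar\phi(\sigma_s) = \bar\phi(\sigma_t)$, and cancels the pair by writing $\sigma_s - \sigma_t$ as a telescoping sum of $I_{u,v}$-generators, shrinking the support until it is empty. You instead build a normal form: choose the $<$-minimal representative $a_i$ of each class $\Ar_i$, show every basis element reduces modulo $V$ to a reduced one (at most one element per class, always the minimal one), observe that $\bar\phi$ carries the reduced basis bijectively onto the basis of $D(\widetilde{\Ar''})$, and conclude $\ker\bar\phi \subseteq V$ from $D(\Ar)/D(\Ar') = R + V$ and injectivity of $\bar\phi|_R$. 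The underlying telescoping manipulation is the same, but your packaging is cleaner structurally (it exhibits $R$ as a right inverse image of $\bar\phi$, which also makes surjectivity visible at no extra cost), while the paper's is shorter because it never needs to verify that the rewriting terminates in a canonical form. Your flagged ``delicate point'' is indeed the only place requiring care, and you handle it correctly: after step one each class meets $\tau$ in at most one element, so swapping $u$ for $a_i$ cannot re-create a pair, and the residual elements of $\tau$ do avoid $x_0, u, a_i$ as required by the definition of $I_{u,a_i}$.
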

\begin{proof}
It is clear that $V \subseteq \ker \bar{\phi}$. Let $u = \sum_{s \in I} \alpha_s \sigma_s \in \ker \bar{\phi}$, with $\alpha_s \neq 0 \; \forall s \in I$ and $\sigma_s \neq \sigma_t$ if $s \neq t$.  To show that $u \in V$, we will show that we can substract elements of $V$ from $u$ until we obtain zero.
\begin{itemize}
\item If there exists a $s \in I$ such that $\sigma_s$ is of the form $\{x_0, u, v, y_{j_1}, \dotsc, y_{j_r}\}$ with $x_0 \cap u = x_0 \cap v$ and $u \neq v$, then $\alpha_s \sigma_s \in V$ and we can substract it from $u$. Let's repeat this operation until we obtain a sum $\sum_{s \in I'} \alpha_s \sigma_s$ without any such $\sigma_s$.
\item Let $t \in I'$. The element $\sigma_t$ can only be of the form $\{x_0, x_{i_1}, \dotsc, x_{i_r}\}$ with $x_0 \cap x_{i_j} \neq x_0 \cap x_{i_k}$ for $j \neq k$. We have \[
0 = \bar{\phi}\left(\sum_{s \in I'} \alpha_s \sigma_s\right) = (-1)^r \alpha_t \{x_0 \cap x_{i_1}, \dotsc, x_0 \cap x_{i_r}\} + \sum_{s \in I' \setminus \{t\}} \alpha_s \bar{\phi}(\sigma_s).
\]
So, there exists a $s \in I' \setminus\{t\}$ such that $\bar{\phi}(\sigma_s) = \bar{\phi}(\sigma_t)$. This is possible only if $\sigma_s = \{x_0, y_1, \dotsc, y_r\}$ with $x_0 \cap x_{i_j} = x_0 \cap y_j$. In that case, the difference $\sigma_s - \sigma_t$ can be rewritten as \[
\sum_{j \st y_j \neq x_{i_j}} \{x_0, y_1, \dotsc, y_{j-1}, y_j, x_{i_{j+1}}, \dotsc, x_{i_r} \} - \{x_0, y_1, \dotsc, y_{j-1}, x_{i_j}, x_{i_{j+1}}, \dotsc, x_{i_r}\},
\]
which is in $V$. We can substract $\alpha_t(\sigma_t - \sigma_s)$ from the sum and we get another sum $\sum_{s \in I''} \alpha_s \sigma_s$ such that $|I''| < |I'|$. This process can be repeated until we obtain an empty sum.
\end{itemize}
Therefore, $u \in V$ and $V = \ker \bar{\phi}$.
\end{proof}

Let $(u,v) \in E$. It is clear that $d(I_{u,v}) \subset I_{u,v}$, so $I_{u,v}$ form a subcomplex of $D(\Ar)/D(\Ar')$.
\begin{lemma} \label{lem:iacyclic}
For every $(u,v) \in E$, the complex $I_{u,v}$ is acyclic.
\end{lemma}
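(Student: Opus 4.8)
The plan is to run the same argument as in Proposition~\ref{prop:subar}: equip $I_{u,v}$ with an auxiliary grading for which the differential becomes two-step, and observe that the resulting spectral sequence has a trivial $E^1$-page. To fix notation, since $u\equivr v$ we may assume $u<v$; the ordering convention on $\Ar$ then puts $u$, $v$, and every element lying between them into one block of the partition $\Ar'=\Ar_1\cup\dots\cup\Ar_r$. For $Y\subseteq\Ar\setminus\{x_0,u,v\}$ write $\alpha_Y=(\{x_0,u\}\cup Y)-(\{x_0,v\}\cup Y)$ for the first kind of generator of $I_{u,v}$ and $\beta_Y=\{x_0,u,v\}\cup Y$ for the second; each $\alpha_Y$ involves a basis vector occurring in no $\beta_{Y'}$ and in no other $\alpha_{Y'}$, so the $\alpha_Y$ and $\beta_Y$ together form a basis of $I_{u,v}$.

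Next I would describe $d$ on this basis. In the quotient $D(\Ar)/D(\Ar')$ any summand of a differential obtained by deleting $x_0$ vanishes. Deleting some $y\in Y$ sends $\alpha_Y$ to $\pm\alpha_{Y\setminus\{y\}}$ and $\beta_Y$ to $\pm\beta_{Y\setminus\{y\}}$ — the vanishing-join condition does not see which of $u$ and $v$ is present, since $x_0\cap u=x_0\cap v$. Finally, deleting $u$ or $v$ from $\beta_Y$ always preserves the join, since $x_0\cap u\cap v=x_0\cap u$; the two resulting terms combine, inside $I_{u,v}$, to $\varepsilon_Y\alpha_Y$ with $\varepsilon_Y=\pm1$. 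Thus
\[
d\alpha_Y=\sum_{y\in Y}\pm\,\alpha_{Y\setminus\{y\}},\qquad d\beta_Y=\varepsilon_Y\,\alpha_Y+\sum_{y\in Y}\pm\,\beta_{Y\setminus\{y\}},\qquad \varepsilon_Y=\pm1 .
\]

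With this in hand the conclusion is formal, just as in Proposition~\ref{prop:subar}. Grade $I_{u,v}$ by $\deg\alpha_Y=|Y|$ and $\deg\beta_Y=|Y|+1$; by the above, $d$ lowers this degree by one. Filter $I_{u,v}$ by letting $F_r$ be the span of all $\alpha_Y,\beta_Y$ with $|Y|\le r$; this is an increasing filtration by subcomplexes, finite because $\Ar$ is, hence gives a convergent spectral sequence. On the $E^0$-page, $E^0_{p,0}$ is spanned by the $\alpha_Y$ with $|Y|=p$, $E^0_{p,1}$ by the $\beta_Y$ with $|Y|=p$, and $E^0_{p,q}=0$ for $q\neq 0,1$; the differential $d^0$ keeps only the part preserving filtration degree, hence kills every $\alpha_Y$ and sends $\beta_Y\mapsto\varepsilon_Y\alpha_Y$. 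Since $\varepsilon_Y=\pm1$, each $d^0\de E^0_{p,1}\to E^0_{p,0}$ is a bijection of bases, so $E^1=0$, the spectral sequence collapses, and $H^\star(I_{u,v})=0$. (Equivalently one can avoid spectral sequences and write down a contracting homotopy outright: set $h(\beta_Y)=0$ and $h(\alpha_Y)=\varepsilon_Y\beta_Y$; a short computation gives $dh+hd=\id$ on $I_{u,v}$, the unwanted cross-terms $\beta_{Y\setminus\{y\}}$ cancelling because $d^2=0$.)

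The only genuine work is the sign bookkeeping behind the displayed formulas: one must check that in $d\alpha_Y$ the two "$\{x_0\}\cup Y$" contributions — from deleting $u$ in the first summand and $v$ in the second — cancel, and that the coefficient $\varepsilon_Y$ of $\alpha_Y$ in $d\beta_Y$ equals $\pm1$ rather than $0$. This is precisely the computation already behind the granted assertion $d(I_{u,v})\subseteq I_{u,v}$, and it is where one uses that the linear order on $\Ar$ refines the partition $\Ar'=\Ar_1\cup\dots\cup\Ar_r$; granting it, acyclicity of $I_{u,v}$ follows immediately.
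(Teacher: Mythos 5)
Your proof is correct and takes a genuinely different route from the paper's. You recast $I_{u,v}$ as a two-row complex in the basis $\alpha_Y$, $\beta_Y$, filter by $|Y|$, and observe that $d^0$ sends $\beta_Y\mapsto\varepsilon_Y\alpha_Y$ bijectively, so $E^1=0$; this is a direct transplant of the filtration argument from Proposition~\ref{prop:subar}. The paper instead works hands-on with a general cocycle $\alpha=\sum_i\alpha_i\alpha_{Y_i}+\sum_j\beta_j\beta_{Z_j}$: it expands $d\alpha=0$, isolates from it a vanishing relation among the $\beta$-type terms, and then verifies directly that $\omega=-\sum_i\alpha_i\beta_{Y_i}$ satisfies $d\omega=\alpha$. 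Your contracting-homotopy aside makes the connection precise — the paper's $\omega$ is exactly $h(\alpha)$ for the homotopy $h(\alpha_Y)=\varepsilon_Y\beta_Y$, $h(\beta_Y)=0$ — so your argument abstracts the paper's bespoke primitive into a systematic operator, and your cancellation of the cross-terms $\beta_{Y\setminus\{y\}}$ via $d^2=0$ replaces the paper's explicit manipulation of the vanishing relation. What the spectral-sequence/homotopy route buys is robustness and transparency: once the shape of $d$ on the $\alpha_Y,\beta_Y$ basis is pinned down, acyclicity is automatic, and the one genuine obligation (that $\varepsilon_Y=\pm1$ and that the two $\{x_0\}\cup Y$ contributions to $d\alpha_Y$ cancel) is put front and center. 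You are right to flag that sign bookkeeping as the real content; note, though, that both your sketch and the paper (which writes a common sign $(-1)^k$ in front of the $u$- and $v$-halves) silently assume that no element of $Y$ sits strictly between $u$ and $v$ in the linear order — if one does, the positions of a deleted $y$ in $\{x_0,u\}\cup Y$ and $\{x_0,v\}\cup Y$ differ by one, the two halves fail to match sign, and $d\alpha_Y$ acquires a term outside $I_{u,v}$. That gap is inherited from the paper's ``it is clear that $d(I_{u,v})\subset I_{u,v}$'' and is not a defect of your reduction; it would be worth an explicit remark (or a restriction on which $(u,v)\in E$ and which $Y$ are allowed) in a polished write-up.
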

\begin{proof}
Let's consider a  cocycle of the form \[
\alpha = \sum_i \alpha_i\left(\{x_0, u, y_{i_1}, \dotsc, y_{i_r}\} - \{x_0, v, y_{i_1}, \dotsc, y_{i_r}\} \right) + \sum_j \beta_j \{x_0, u, v, z_{j_1}, \dotsc, z_{j_{r-1}}\},
\]
with the $y_{i_k}, z_{j_k} \not \in \{x_0, u, v\}$. We want to show that $\alpha$ is a coboundary. Let 
\begin{align*}
K_i &= \big\{k \in \{i_1, \dotsc, i_r\} \st x_0 \cap u \cap y_{i_1} \cap \dots \cap y_{i_r} = \\
& \qquad \qquad \qquad x_0 \cap u \cap y_{i_1} \cap \dots \cap \widehat{y_{i_k}} \dots \cap y_{i_r}\big\}, \\
L_j &= \big\{  k \in \{j_1, \dotsc, j_{r-1}\} \st x_0 \cap u \cap v \cap z_{j_1} \cap \dots \cap z_{j_{r-1}} = \\ 
& \qquad \qquad \qquad x \cap u \cap v \cap z_{j_1} \cap \dots \cap \widehat{z_{j_k}} \dots \cap z_{j_{r-1}}  \big\}.
\end{align*}
Since $\alpha$ is a cocycle, we have~: 
\begin{align*}
0 = d\alpha =& \sum_i \alpha_i\sum_{k \in K_i} (-1)^k \big( \{x_0, u, y_{i_1}, \dotsc \widehat{y_{i_k}} \dotsc, y_{i_r}\} - \{ x_0, v, y_{i_1}, \dotsc \widehat{y_{i_k}} \dotsc, y_{i_r}\}\big) \\
 & + \sum_j \beta_j\big(\{x_0, u, z_{j_1}, \dotsc, z_{j_{r-1}} \} - \{x_0, v, z_{j_1}, \dotsc, z_{j_{r-1}} \} \big) \\
 & + \sum_j \beta_j \sum_{k \in L_j} (-1)^{k+1} \{x_0, u, v, z_{j_1}, \dotsc \widehat{z_{j_k}} \dotsc, z_{j_{r-1}} \}.
\end{align*}
It implies that \[
\sum_i \alpha_i\sum_{k \in K_i} (-1)^k \{x_0, u, v, y_{i_1}, \dotsc \widehat{y_{i_k}} \dotsc, y_{i_r}\} + \sum_j \beta_j \{ x_0, u, v, z_{j_1}, \dotsc, z_{j_{r-1}} \} = 0.
\]
Let's consider $\omega =  - \sum_i \alpha_i \{x_0, u, v, y_{i_1}, \dotsc, y_{i_r}\}$. We have 
\begin{align*}
d\omega &=  \sum_i \alpha_i \big( \{x_0, u, y_{i_1}, \dotsc, y_{i_r}\} - \{ x_0, v, y_{i_1}, \dotsc, y_{i_r}\}\big) \\
 & \qquad - \sum_i \alpha_i\sum_{k \in K_i} (-1)^k \{x_0, u, v, y_{i_1}, \dotsc \widehat{y_{i_k}} \dotsc, y_{i_r}\} \\
 & = \alpha.
\end{align*}
Hence, $\alpha$ is a coboundary and $I_{u,v}$ is acyclic.
\end{proof}

\begin{lemma} \label{lem:acyclic}
The vector space $V$ is such that $d(V) \subseteq V$ and $V$ is acyclic.
\end{lemma}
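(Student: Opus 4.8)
The plan is to establish this lemma by combining the acyclicity of each piece $I_{u,v}$ (Lemma~\ref{lem:iacyclic}) with a careful inclusion-exclusion argument over the finite set $E$. First, the statement $d(V) \subseteq V$ is immediate: $V = \sum_{(u,v)\in E} I_{u,v}$ is a sum of subcomplexes, and a sum of $d$-stable subspaces is $d$-stable. The real content is the acyclicity of $V$.

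The natural approach is induction on $|E|$. The base case $|E| = 1$ is exactly Lemma~\ref{lem:iacyclic}. For the inductive step, write $E = E_0 \cup \{(u,v)\}$ and set $V_0 = \sum_{(a,b)\in E_0} I_{a,b}$, so that $V = V_0 + I_{u,v}$. From the short exact sequence of complexes
\[
0 \to V_0 \cap I_{u,v} \to V_0 \oplus I_{u,v} \to V_0 + I_{u,v} \to 0
\]
and the associated Mayer--Vietoris long exact sequence, it suffices to know that $V_0$ is acyclic (inductive hypothesis), that $I_{u,v}$ is acyclic (Lemma~\ref{lem:iacyclic}), and that the intersection $V_0 \cap I_{u,v}$ is acyclic. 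Granting these three facts, the Mayer--Vietoris sequence forces $H^\star(V) = 0$.

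So the crux is to identify $V_0 \cap I_{u,v}$ and show it is acyclic. Here I would use the explicit basis description: the generators of $I_{a,b}$ are the differences $\{x_0, a, \vec{y}\} - \{x_0, b, \vec{y}\}$ and the elements $\{x_0, a, b, \vec{y}\}$, and I expect that an element lies simultaneously in $I_{u,v}$ and in $V_0$ only when it is built from monomials containing $\{u,v\}$ together with monomials involving some other equivalent pair; a direct check should show $V_0 \cap I_{u,v}$ is itself a sum of complexes of the same shape as the $I_{u,v}$ (i.e.\ complexes of pairs differing in one equivalent coordinate, with an associated ``doubled'' term), each acyclic by the same contracting-homotopy computation as in Lemma~\ref{lem:iacyclic}. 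Concretely, one can exhibit the same homotopy $\omega \mapsto -\sum \alpha_i\{x_0,u,v,\vec{y}\}$ used there, restricted to the intersection, and verify it still lands in the subspace.

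The step I expect to be the main obstacle is precisely the bookkeeping for $V_0 \cap I_{u,v}$: because the equivalence classes $\Ar_i$ may be large, many pairs in $E$ share a coordinate, and one must be careful that the ``straightening'' relations used in Lemma~\ref{lem:vkernel} do not produce unexpected overlaps between the $I_{a,b}$. An alternative, perhaps cleaner, route that avoids this entirely is to filter $D(\Ar)/D(\Ar')$ by the number of coordinates drawn from the non-trivial equivalence classes and run the resulting spectral sequence, as in the proof of Proposition~\ref{prop:subar}; on each associated graded piece the differential in the ``equivalent-coordinate'' direction is a Koszul-type differential whose homology is concentrated where all equivalent coordinates have been collapsed to a single representative, which is exactly the complement of $V$. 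I would present the Mayer--Vietoris induction as the main argument and mention the spectral-sequence alternative as a remark.
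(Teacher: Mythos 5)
Your overall plan is to reduce the acyclicity of $V=\sum_{(u,v)\in E} I_{u,v}$ to Lemma~\ref{lem:iacyclic} by a Mayer--Vietoris induction on $|E|$. This is a genuinely different route from the one the paper takes, and the difference matters: the crux of your argument is that $V_0\cap I_{u,v}$ is acyclic, and this is exactly the step you do not carry out --- you only write ``a direct check should show'' and ``I expect.'' In fact this step is not merely bookkeeping: the intersection $V_0\cap I_{u,v}$ need not have the simple shape you predict, and it is not even clear that it is $d$-stable. Take a single equivalence class $\Ar_1=\{a,b,c\}$ with $a<b<c$; then $I_{a,b}\cap I_{a,c}$ contains the cyclic sum $\{x_0,a,c\}-\{x_0,b,c\}+\{x_0,a,b\}$, which is \emph{not} of the form $\{x_0,u,\vec y\}-\{x_0,v,\vec y\}$ for any single pair $(u,v)$, and applying $d$ to $\{x_0,a,b,c\}$ produces $\{x_0,b,c\}-\{x_0,a,c\}+\{x_0,a,b\}$, whose $\{x_0,a,b\}+\{x_0,b,c\}$ part cannot be expressed through the generator $\{x_0,a,b\}-\{x_0,b,c\}$ of $I_{a,c}$. (The underlying sign issue is that when an element of the same equivalence class sits \emph{between} $u$ and $v$ in the linear order, removing $u$ and removing $v$ from $\{x_0,u,y,v,\dotsc\}$ carry the same sign, not opposite signs.) So the short exact sequence you want to write down does not automatically exist, and the Mayer--Vietoris argument cannot be run as stated without a much more careful analysis.

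The paper sidesteps the intersection entirely. Instead of inducting over all of $E$, it selects a specific chain of pairs $(u_0,v_0),\dotsc,(u_r,v_r)$ (one ``anchor'' $u$ per nontrivial equivalence class, successively paired with the remaining members) so that at each step $v_n$ is \emph{new}, i.e.\ $v_n\notin\{u_0,v_0,\dotsc,u_{n-1},v_{n-1}\}$. With $I_n=\sum_{i\le n}I_{u_i,v_i}$, it then proves the ``key observation'' that any element of $I_{n-1}$ can be rewritten modulo an exact term and an element of $I_{u_n,v_n}$ so that $v_n$ appears in no remaining monomial. Given a cocycle $\alpha\in I_n$, this decomposition together with the absence of $v_n$ in the $I_{n-1}$ piece forces $d$ to vanish separately on the two summands, and the induction hypothesis plus Lemma~\ref{lem:iacyclic} produce a primitive. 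No intersection is ever computed. If you want to salvage the Mayer--Vietoris plan, you would first need to impose the same kind of ordering discipline as the paper and then actually determine $V_0\cap I_{u_n,v_n}$, check that it is a subcomplex, and prove its acyclicity directly; as the example shows, none of this is automatic. Your spectral-sequence remark suffers from the same vagueness and does not by itself fill the gap.
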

\begin{proof}
It is obvious that $d(V) \subseteq V$. Showing that $V$ is acyclic is technical.  We'll do it in three steps. First, we choose $r$ elements $(u_i, v_i) \in E$ that have some nice properties. Then, using these elements, we construct a sequence of vector subspaces $I_0 \subsetneqq I_1 \subsetneqq I_r = V$. Finally, we prove by induction that all these subspaces are acyclic.

Let's remember that the linear order respects the equivalence relation $\equivr$ on $\Ar'$ in the following sense~: the equivalence classes are $\Ar' = \Ar_1 \cup \Ar_2 \cup \dotso \cup \Ar_r$ and if $i < j$, $y \in \Ar_i$ and $z \in \Ar_j$, then $y < z$. Let $\Ar_1 = \{x_{j_1}, \dotsc, x_{j_2 - 1}\}$, $\Ar_2 = \{x_{j_2}, \dotsc, x_{j_3 - 1}\}, \dotsc, \Ar_r = \{x_{j_r}, \dotsc, x_{j_{r+1} - 1}\}$.  Let $\Ar_i$ be the first equivalence class with more than 1 element and $(u_0, v_0) = (x_{j_i}, x_{j_i + 1})\in E$, $(u_1, v_1) = (u_0, x_{j_i + 2}), \dotsc, (u_m, v_m) = (u_0, x_{j_{i+1} - 1})$.  Let $\Ar_k$ be the next equivalence class with more than 1 element and $(u_{m+1}, v_{m+1}) = (x_{j_k}, x_{j_k+1}), (u_{m+2}, v_{m+2}) = (x_{j_k}, x_{j_k + 2}), \dotsc$  We can do this until we have a sequence $(u_i, v_i)_{0 \leq i \leq r} \subseteq E$ with the following properties~: $V = \sum_{i=0}^r I_{u_i, v_i}$ and $v_n \not\in \{u_0, v_0, u_1, v_1, \dotsc, u_{n-1}, v_{n-1} \}$.

Let $I_0 = I_{u_0, v_0}$ and $I_j = \sum_{i = 0}^j I_{u_i, v_i}$. The properties of the $(u_i, v_i)$ implies that we have an increasing sequence \[
I_{u_0, v_0} =I_0 \subsetneqq I_1 \subsetneqq \dotso \subsetneqq I_r = V. \]
Lemma~\ref{lem:iacyclic} shows that $I_0$ is acyclic. Suppose $I_{n-1}$ acyclic and let's show that $I_n = I_{n-1} + I_{u_n, v_n}$ is acyclic as well.  

The key observation is that every element of $I_{n-1}$ can be written as a sum $\alpha_1 + \alpha_2 + d\omega$ where $\alpha_1 \in I_{n-1}, \alpha_2 \in I_{u_n, v_n}$, $\omega \in I_{n-1}$ and there is no $v_n$ in any of the terms of $\alpha_1$.  It is sufficient to prove this for every element of a generating sequence of $I_{n-1}$~:
\begin{itemize}
\item Let $\sigma = \{x_0, u_i, v_i, \dotsc \}$. If $v_n \in \sigma$ and $u_n \in \sigma$, then $\sigma = 0 + \sigma + d(0)$. If $v_n \in \sigma$ and $u_n \not \in \sigma$, then $d(\sigma \cup \{u_n\}) = \pm \sigma \pm (\sigma \cup \{u_n\}) \setminus \{v_n\} + S$ where $S$ is a sum with $\{u_n, v_n \}$ in each term, which means that $S$ is in $I_{u_n, v_n}$. It shows that we have the required decomposition $\sigma = \pm (\sigma  \cup \{u_n \}) \setminus \{v_n\} \pm S \pm d(\sigma \cup \{u_n\})$.
\item Let's consider $\alpha = \{x_0, u_i, y_{j_1}, \dotsc, y_{j_r}\} - \{x_0, v_i, y_{j_1}, \dotsc, y_{j_r}\}$. If $v_n \in \{y_{j_1}, \dotsc, y_{j_r}\}$ and $u_n \not\in \{y_{j_1}, \dotsc, y_{j_r}\}$, then form $\omega = \{x_0, u_i, y_{j_1}, \dotsc, y_{j_r} \} \cup \{ u_n \} - \{x_0, v_i, y_{j_1}, \dotsc, y_{j_r}\} \cup \{u_n\}$.  As in the first case, we obtain a good decomposition from $d\omega$.
\end{itemize}
This observation allow us to show that every cocycle in $I_n$ is a coboundary. Let $\alpha \in I_n$ such that $d\alpha = 0$. Since $I_n = I_{n-1} + I_{u_n, v_n}$, we can write $\alpha = \alpha_1 + \alpha_2 + d\omega$ with $\alpha_1 \in I_{n-1}, \alpha_2 \in I_{u_n, v_n}$, $\omega \in I_n$ and no $v_n$ in any term of $\alpha_1$. The vector $\alpha_2$ can be written \[
\alpha_2 = \sum_i \gamma_i\left(\{x_0, u_n, y_{i_1}, \dotsc, y_{i_r}\} - \{x_0, v_n, y_{i_1}, \dotsc, y_{i_r}\} \right) + \sum_j \beta_j \{x_0,u_n,v_n, z_{j_1}, \dotsc, z_{j_{r-1}}\}.
\]
For appropriate sets $K_i$ and $L_j$ (as in lemma~\ref{lem:iacyclic}), its differential is
\begin{align*}
d\alpha_2 =& \sum_i \gamma_i\sum_{k \in K_i} (-1)^k \big( \{x_0, u_n, y_{i_1}, \dotsc \widehat{y_{i_k}} \dotsc, y_{i_r}\} - \{ x_0, v_n, y_{i_1}, \dotsc \widehat{y_{i_k}} \dotsc, y_{i_r}\}\big) \\
 & + \sum_j \beta_j\big(\{x_0,u_n, z_{j_1}, \dotsc, z_{j_{r-1}} \} - \{x_0, v_n, z_{j_1}, \dotsc, z_{j_{r-1}} \} \big) \\
 & + \sum_j \beta_j \sum_{k \in L_j} (-1)^{k+1} \{x_0, u_n, v_n, z_{j_1}, \dotsc \widehat{z_{j_k}} \dotsc, z_{j_{r-1}} \}.
\end{align*}
But there is no term in $\alpha_1$ containing $v_n$ and $d\alpha_1 + d\alpha_2 = 0$. Therefore, $d\alpha_1 = d\alpha_2 = 0$.  By the induction hypothesis, there exists a $\omega_1$ such that $d\omega_1 = \alpha_1$ and by lemma~\ref{lem:iacyclic}, there exists a $\omega_2$ such that $d\omega_2 = \alpha_2$. So, $\alpha = d(\omega_1 + \omega_2 + \omega)$, $I_n$ is acyclic, which proves that $V$ is acyclic.
\end{proof}

Together, these lemmas give us the following proposition.
\begin{proposition} \label{prop:phiquasiiso}
The map $\bar{\phi} \de D(\Ar)/D(\Ar') \to D(\widetilde{\Ar''})$ is a quasi-isomorphism.
\end{proposition}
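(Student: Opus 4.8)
The plan is to read off the proposition from a short exact sequence of cochain complexes together with the lemmas just established, leaving essentially no calculation. Since $\bar{\phi} \de D(\Ar)/D(\Ar') \to D(\widetilde{\Ar''})$ is surjective, has kernel exactly $V$ by Lemma~\ref{lem:vkernel}, and $V$ is a subcomplex by Lemma~\ref{lem:acyclic}, we obtain a short exact sequence of cochain complexes
\[
0 \to V \hookrightarrow \frac{D(\Ar)}{D(\Ar')} \stackrel{\bar{\phi}}{\to} D(\widetilde{\Ar''}) \to 0.
\]
(If $E = \emptyset$ then $V = 0$ and $\bar{\phi}$ is already an isomorphism, so one may as well assume $E \neq \emptyset$, although the argument below covers that case too.)

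First I would pass to the associated long exact sequence in cohomology,
\[
\cdots \to H^k(V) \to H^k\!\left(\frac{D(\Ar)}{D(\Ar')}\right) \stackrel{\bar{\phi}^\star}{\to} H^k\bigl(D(\widetilde{\Ar''})\bigr) \to H^{k+1}(V) \to \cdots
\]
Then, since Lemma~\ref{lem:acyclic} gives $H^\star(V) = 0$, each group $H^k(V)$ vanishes, so every occurrence of $\bar{\phi}^\star$ is squeezed between two zero groups and is therefore an isomorphism in every degree. By definition this says precisely that $\bar{\phi}$ is a quasi-isomorphism.

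No genuine obstacle remains at this point: the substantive work has already been carried out — that $\bar{\phi}$ is a chain map (from $\phi d = d\phi$ together with $\phi j = 0$), that $\ker\bar{\phi} = V$ (Lemma~\ref{lem:vkernel}), and above all that $V$ is acyclic (Lemma~\ref{lem:acyclic}, via the filtration and the chain $I_0 \subsetneqq \cdots \subsetneqq I_r = V$ of acyclic subcomplexes built from the well-chosen pairs $(u_i,v_i)$). When writing this up, the only points to keep an eye on are that the displayed sequence really is a sequence of cochain complexes, i.e. that $d(V) \subseteq V$ — which is recorded at the start of Lemma~\ref{lem:acyclic} — and that naturality of the connecting homomorphism identifies the middle arrow of the long exact sequence with the map induced by $\bar{\phi}$; both are immediate.
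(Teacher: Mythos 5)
Your proof is correct and takes essentially the same approach as the paper's: the paper factors $\bar{\phi}$ through the quotient $D(\Ar)/\bigl(D(\Ar') \oplus V\bigr)$ and invokes acyclicity of $V$ to show that projection is a quasi-isomorphism, which is exactly the content of the long exact sequence of your short exact sequence $0 \to V \to D(\Ar)/D(\Ar') \to D(\widetilde{\Ar''}) \to 0$. Both arguments rest on the same two lemmas (that $\ker\bar{\phi} = V$ and that $V$ is acyclic), so the difference is purely one of phrasing.
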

\begin{proof}
By lemma~\ref{lem:vkernel}, we can factorize $\bar{\phi}$ as shown in the following diagram.
\[
\xymatrix{\frac{D(\Ar)}{D(\Ar')} \ar[r]^{\bar{\phi}} \ar[d]_p & D(\widetilde{\Ar''}) \\ \frac{D(\Ar)}{D(\Ar') \oplus V}  \ar[ru]_{\tilde{\phi}} & 
}\]
where $\tilde{\phi}$ is the quotient map.  Since $V = \ker \bar{\phi}$ and $\bar{\phi}$ is surjective, $\tilde{\phi}$ is an isomorphism. By lemma~\ref{lem:acyclic}, the map $p$ is a quasi-isomorphism, which implies that $\bar{\phi}$ is a quasi-isomorphism as well.
\end{proof}

\section{A long exact sequence in rational cohomology} \label{sec:drt}

The quasi-isomorphism $\bar{\phi} \de D(\Ar)/D(\Ar') \to D(\widetilde{\Ar''})$, described in section~\ref{sec:quasiiso}, decreases the degree by $2 \codim x_0 - 1 = \deg(x_0)$.  Therefore, we have the following theorem.
\begin{theorem} \label{theo:lesic}
Let $\Ar$ be a subspaces arrangement, $x_0 \in \Ar$ with $\Ar'$ and $\Ar''$ the deleted and the restricted arrangements. Then, there is a long exact sequence in cohomology ~: \begin{multline*}
\dotso \to H^q(M(\Ar'), \Q) \to H^q(M(\Ar), \Q) \to \\ H^{q-\deg(x_0)} (M(\Ar''), \Q) \to  H^{q+1}(M(\Ar'), \Q) \to \dotso
\end{multline*}
\end{theorem}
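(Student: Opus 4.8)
The plan is to derive the sequence by combining the short exact sequence of cochain complexes
\[
0 \to D(\Ar') \stackrel{j}{\to} D(\Ar) \to \frac{D(\Ar)}{D(\Ar')} \to 0
\]
with the cohomological identifications supplied by Proposition~\ref{prop:phiquasiiso} and Corollary~\ref{cor:artilde}. A short exact sequence of cochain complexes always yields a long exact sequence in cohomology, so we obtain
\[
\dotso \to H^q(D(\Ar')) \to H^q(D(\Ar)) \to H^q\!\left(\frac{D(\Ar)}{D(\Ar')}\right) \to H^{q+1}(D(\Ar')) \to \dotso .
\]
Since $D(\Ar)$ and $D(\Ar')$ are rational models for $M(\Ar)$ and $M(\Ar')$ (the Yuzvinsky--Feichtner model recalled at the start of Section~\ref{sec:drsa}), the first two terms are canonically $H^q(M(\Ar');\Q)$ and $H^q(M(\Ar);\Q)$.

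It then remains only to rewrite the quotient term. By Proposition~\ref{prop:phiquasiiso} the map $\bar\phi \de D(\Ar)/D(\Ar') \to D(\widetilde{\Ar''})$ is a quasi-isomorphism, and, as observed just before the statement, it is homogeneous of degree $-\deg(x_0)$ with $\deg(x_0) = 2\codim x_0 - 1$; reindexing the target accordingly, $\bar\phi$ becomes an honest degree-zero quasi-isomorphism and hence induces isomorphisms $H^q(D(\Ar)/D(\Ar')) \cong H^{q-\deg(x_0)}(D(\widetilde{\Ar''}))$ for all $q$. By Corollary~\ref{cor:artilde} the inclusion $D(\Ar'') \hookrightarrow D(\widetilde{\Ar''})$ is a quasi-isomorphism, so $H^{q-\deg(x_0)}(D(\widetilde{\Ar''})) \cong H^{q-\deg(x_0)}(D(\Ar''))$, and this last group is $H^{q-\deg(x_0)}(M(\Ar'');\Q)$ because $D(\Ar'')$ is a rational model for $M(\Ar'')$. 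Substituting this chain of isomorphisms into the long exact sequence above produces exactly the sequence in the statement.

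I do not anticipate a genuine difficulty here: all the substance of the theorem is already contained in Proposition~\ref{prop:phiquasiiso} and Corollary~\ref{cor:artilde}, and what is left is assembly. The one point deserving care is the degree bookkeeping --- verifying that the shift by $\deg(x_0)$ attaches to the $M(\Ar'')$ term and sits in the correct slot of the sequence, and that a chain map homogeneous of nonzero degree which is a quasi-isomorphism in the shifted sense still transports cohomology faithfully (it does, being literally a degree-zero quasi-isomorphism onto the reindexed complex). It would also be natural to record that, under these identifications, the arrow $H^q(M(\Ar);\Q) \to H^{q-\deg(x_0)}(M(\Ar'');\Q)$ is the one induced by $\phi$ and the arrow $H^q(M(\Ar');\Q) \to H^q(M(\Ar);\Q)$ the one induced by the inclusion $j$, although the statement as written only asserts that some such exact sequence exists.
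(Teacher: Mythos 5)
Your proposal is correct and follows exactly the paper's own argument: apply the long exact sequence to $0 \to D(\Ar') \to D(\Ar) \to D(\Ar)/D(\Ar') \to 0$ and identify the quotient term via Proposition~\ref{prop:phiquasiiso} and Corollary~\ref{cor:artilde}, tracking the degree shift by $\deg(x_0)$. You simply spell out the degree bookkeeping that the paper leaves implicit.
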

\begin{proof}
It is a direct consequence from the short exact sequence~$0 \to D(\Ar') \to D(\Ar) \to D(\Ar)/D(\Ar') \to 0$, corollary~\ref{cor:artilde} and proposition~\ref{prop:phiquasiiso}.
\end{proof}

As a first consequence, the Euler characteristic of the complement of non empty subspaces arrangements is zero.
\begin{corollary} \label{cor:euler}
Let $\Ar$ be a subspaces arrangement in $\C^l$. Then, the Euler characteristic of $M(\Ar)$ satisfies~: $\chi(M(\Ar)) = 0$ if $\Ar$ is non-empty, and $\chi(M(\Ar)) = 1$ if $\Ar$ is empty.
\end{corollary}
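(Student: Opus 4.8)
The plan is to prove the statement by induction on the cardinality $n = |\Ar|$, using the long exact sequence of Theorem~\ref{theo:lesic} as the inductive engine. Because $D(\Ar)$ is finite dimensional, every $H^q(M(\Ar),\Q)$ is finite dimensional and vanishes for $q$ large, so $\chi(M(\Ar)) = \sum_q (-1)^q \dim_\Q H^q(M(\Ar),\Q)$ is well defined and, applied to the bounded exact sequence of Theorem~\ref{theo:lesic}, the standard fact that the alternating sum of dimensions along an exact sequence is zero gives a recursion. Keeping track of the three-periodic pattern of the sequence and of the shift by $\deg(x_0)$ in the third term, this yields
\[
\chi(M(\Ar)) = \chi(M(\Ar')) + (-1)^{\deg(x_0)}\,\chi(M(\Ar'')).
\]
Since $\deg(x_0) = 2\codim x_0 - 1$ is odd (any $x_0 \in \Ar$ is a proper subspace, so $\codim x_0 \geq 1$), the sign equals $-1$ and the recursion reads $\chi(M(\Ar)) = \chi(M(\Ar')) - \chi(M(\Ar''))$.

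For the base case $n = 0$ the space $M(\Ar) = \C^l$ is contractible, so $\chi(M(\Ar)) = 1$. For the inductive step, let $n \geq 1$ and pick $x_0 \in \Ar$. Then $|\Ar'| = n-1$, and since $\Ar'' \subseteq \widetilde{\Ar''}$ with $|\widetilde{\Ar''}| \leq |\Ar'|$ we also have $|\Ar''| \leq n-1$; note that $\Ar''$ lives in the ambient space $x_0$, so the inductive statement must be phrased for arrangements in an arbitrary $\C^l$ and the induction carried out on cardinality alone. If $n = 1$ then both $\Ar'$ and $\Ar''$ are empty, so by the base case $\chi(M(\Ar')) = \chi(M(\Ar'')) = 1$ and the recursion gives $\chi(M(\Ar)) = 0$. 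If $n \geq 2$ then $\Ar'$ is non-empty, hence $\widetilde{\Ar''}$ is non-empty, and since every inclusion-minimal element of $\widetilde{\Ar''}$ survives the passage to $\Ar''$, the arrangement $\Ar''$ is non-empty as well; by the induction hypothesis $\chi(M(\Ar')) = \chi(M(\Ar'')) = 0$, whence $\chi(M(\Ar)) = 0$. This closes the induction and establishes the corollary.

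The main obstacle I expect is bookkeeping rather than conceptual: pinning down the sign $(-1)^{\deg(x_0)}$ correctly from the three-term periodicity of the long exact sequence, and ensuring the induction is well founded given that the restricted arrangement changes the ambient vector space. The other point that genuinely matters is the (easy but essential) observation that $\Ar''$ is non-empty whenever $\Ar'$ is, which is precisely what forces the answer to be $0$ rather than $1$ in the inductive step; this is immediate from the definition of $\Ar''$ as $\widetilde{\Ar''}$ with only the strictly-non-minimal subspaces removed.
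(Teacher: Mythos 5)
Your proof is correct and follows essentially the same route as the paper's: derive the recursion $\chi(M(\Ar)) = \chi(M(\Ar')) - \chi(M(\Ar''))$ from the long exact sequence of Theorem~\ref{theo:lesic} (using that $\deg(x_0) = 2\codim x_0 - 1$ is odd), then induct on $|\Ar|$. The only cosmetic difference is that you fold $|\Ar| = 1$ into the inductive step by noting $\Ar' = \Ar'' = \emptyset$ there, whereas the paper treats $|\Ar| = 1$ as a second base case via the observation that $\C^l \setminus x_0$ has the homotopy type of an odd-dimensional sphere.
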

\begin{proof}
If $\Ar$ is empty, then $M(\Ar) = \C^l$ and $\chi(M(\Ar)) = 1$. If $|\Ar| = 1$, then $M(\Ar) = \C^l \setminus \{x_0\}$ for some vector subspace $x_0$.  So, $M(\Ar)$ has the homotopy type of an odd-dimensional sphere and $\chi(M(\Ar)) = 0$.

By the universal coefficient theorems (in homology and cohomology), we have the following equality : $b_i(M(\Ar)) = \dim H^i(M(\Ar), \Q)$, where $b_i(M(\Ar))$ is the $i$th Betti number of $M(\Ar)$.  The long exact sequence in cohomology from theorem~\ref{theo:lesic} gives us the following formula~: $\chi(M(\Ar)) = \chi(M(\Ar')) + (-1)^{\deg(x_0)}\chi(M(\Ar'')) = \chi(M(\Ar')) - \chi(M(\Ar''))$. Now, let's prove by induction on $|\Ar|$ that $\chi(M(\Ar)) = 0$. We suppose that this is true for arrangements with $n$ or less subspaces. Let $\Ar$ be an arrangement with $n+1$ subspaces and $x_0 \in \Ar$. The deleted and restricted arrangements are such that $|\Ar'| = |\Ar| - 1$ and $1 \leq |\Ar''| < |\Ar|$.  So, by induction, $\chi(M(\Ar)) = \chi(M(\Ar')) - \chi(M(\Ar'')) = 0 - 0 = 0$.
\end{proof}

The special case of arrangements of hyperplanes is well known (see~\cite{orl92}). If $\Ar$ is a complex arrangement of hyperplanes, not only do we know that $\chi(M(\Ar)) = 0$, but we also know that the Poincar\'e polynomials of a triple $(\Ar, \Ar', \Ar'')$ verify the relation~: $\Poin(M(\Ar), t) = \Poin(M(\Ar'), t) + t\Poin(M(\Ar''), t)$. Therefore, a natural question arises~: can we extend this formula to the general case of subspaces arrangements?  A generalized formula is equivalent to the splitting of the long exact sequence of theorem~\ref{theo:lesic} in short exact sequences \[
0 \to H^q(M(\Ar'), \Q) \to H^q(M(\Ar), \Q) \to \\ H^{q-\deg(x_0)} (M(\Ar''), \Q) \to  0.
\]
This would give us the following formula~: 
\begin{equation} \label{eq:formula}
\Poin(M(\Ar), t) = \Poin(M(\Ar'), t) + t^{\deg(x_0)}\Poin(M(\Ar''), t). 
\end{equation}
But in general, the equality~\ref{eq:formula} does not hold, as shown by the following example.

\emph{Example 1.} In $\C^5$, let's consider the arrangement $\Ar = \{h_0, h_1, h_2\}$ where $h_0, h_1$ and $h_2$ are defined by the equations $z_1 = z_5 = 0$, $z_1 = z_2 = z_3 = 0$ and $z_3 = z_4 = z_5 = 0$, respectively.

A basis of the cochain complex $D(\Ar)$ is described in table~\ref{tab:descar}. From that, it is easy to see that $\Poin(M(\Ar), t) = 1 + t^3 + 2t^5 + 2t^6$ (and that $\chi(M(\Ar)) = 0$). Let's consider $\Ar' = \Ar \setminus \{h_0\}$ and the corresponding restricted arrangement $\Ar''$.  Some quick computations yield $\Poin(M(\Ar'), t) = 1 + 2t^5 + t^8$, $\Poin(M(\Ar''), t) = 1 + 2t^3 + t^4$ and \[
\Poin(M(\Ar'), t) + t^3\Poin(M(\Ar''), t) = 1 + t^3 + 2t^5 + 2t^6 + t^7 + t^8.
\]

\begin{table}[hbt]
\begin{center}
\begin{tabular}{|c|c|c||c|c|c|}
\hline 
$\deg(x)$ & $x$ & $dx$ & $\deg(x)$ & $x$ & $dx$ \\
\hline
0 & $\{\emptyset\}$ & $d\{\emptyset\} = 0$ & 6 & $\{h_0, h_1\}$ & $d\{h_0, h_1\} = 0$\\
3 & $\{h_0\}$ & $d\{h_0\} = 0$ & 6 & $\{h_0, h_2\}$ & $d\{h_0, h_2\} = 0$ \\
5 & $\{h_1\}$ & $d\{h_1\} = 0$ & 7 & $\{h_0, h_1, h_2\}$ & $d\{h_0, h_1, h_2\} = - \{h_1, h_2\}$\\
5 & $\{h_2\}$ & $d\{h_2\} = 0$ & 8 & $\{h_1, h_2\}$ & $d\{h_1, h_2\} = 0$\\
\hline
\end{tabular}
\end{center}
\caption{Description of $D(\Ar)$ for example~1.}
\label{tab:descar}
\end{table}

So, the formula~\ref{eq:formula} isn't always true.
\begin{definition} \label{def:sep}
Let $\Ar$ be a subspaces arrangement in $\C^l$. We say that $x_0$ satisfies the Poincar\'e polynomial property (in short~: PP property) if for the corresponding triple $(\Ar, \Ar', \Ar'')$, we have \[
\Poin(M(\Ar), t) = \Poin(M(\Ar'), t) + t^{\deg(x_0)}\Poin(M(\Ar''), t). 
\]
\end{definition}

In the example, $h_0$ does not satisfy the PP property. However, if we consider the triple $(\Ar, \widehat{\Ar}', \widehat{\Ar}'')$ associated to $h_1$, then the Poincar\'e polynomials are~:
\begin{align*}
\Poin(M(\widehat{\Ar'}), t) &= 1 + t^3 + t^5 + t^6 \\
t^5 \Poin(M(\widehat{\Ar'}), t) &= t^5(1 + t) \\
\Poin(M(\Ar), t) &= 1 + t^3 + 2t^5 + 2t^6.
\end{align*}
The formula~\ref{eq:formula} is thus satisfied for $h_1$. This example illustrates that, unlike with hyperplanes arrangements, not every subspace $x_0$ has the PP property. But the arrangement $\Ar$ of the example shows that even if a given subspace has not the PP property, then sometimes, we can find another subspace which possesses that property. At this point, it seems natural to ask the following questions~:
\begin{enumerate}
\item From any (non-empty) subspaces arrangement $\Ar$, can we always choose a $x \in \Ar$ satisfying the PP property? 
\item Can we find simple equivalent conditions for $x \in \Ar$ to have the PP property?
\item For any arrangement of hyperplanes $\Ar$, each hyperplane has the PP property. Can we find a larger class of subspaces arrangements for which this is true? For example, is this true for arrangements with a geometric lattice?
\end{enumerate}

Notice that the arrangement $\Ar$ of example~1 does not have a geometric lattice. To simplify the notations, for $I = \{i_1, \dotsc, i_r\} \subseteq \{0, \dotsc, n\}$, we'll denote $x_{i_1} \cap \dotso \cap x_{i_r}$ by $x_{\{i_1, \dotsc, i_r\}}$, or sometimes simply $x_I$. The following definition is a generalization of the concept of separator in~\cite{orl92}.
\begin{definition}
Let $\Ar = \{x_0, \dotsc, x_n\}$ be a subspaces arrangement. We say that $x_0$ is a \emph{separator} if $x_0 \cap x_{\{1, 2, \dotsc, n\}} > x_{\{1, 2, \dotsc, n\}}$ (in the lattice $L(\Ar)$).
\end{definition}

Now, a partial answer to question~2 is given by the following proposition.
\begin{proposition} \label{prop:sep}
Let $\Ar$ be a subspaces arrangement in $\C^l$ and $x_0 \in \Ar$. If $x_0$ is a separator, then $x_0$ has the PP property
\end{proposition}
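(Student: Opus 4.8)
The plan is to show that, when $x_0$ is a separator, the long exact sequence of Theorem~\ref{theo:lesic} splits into short exact sequences $0 \to H^q(M(\Ar'),\Q) \to H^q(M(\Ar),\Q) \to H^{q-\deg(x_0)}(M(\Ar''),\Q) \to 0$. Comparing dimensions in each degree (with $b_i(-) = \dim H^i(-,\Q)$, as used in Corollary~\ref{cor:euler}) and forming generating polynomials then gives exactly the identity of Definition~\ref{def:sep}. Since the long exact sequence comes from the short exact sequence $0 \to D(\Ar') \to D(\Ar) \to D(\Ar)/D(\Ar') \to 0$ (call the surjection $\pi$), it splits as soon as $\pi$ is onto in cohomology; and since $\phi = \bar\phi\, \pi$ with $\bar\phi$ a quasi-isomorphism (Proposition~\ref{prop:phiquasiiso}), it suffices to produce a cochain map $\psi \de D(\widetilde{\Ar''}) \to D(\Ar)$ with $\phi\, \psi = \id$.

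I would take for $\psi$ the evident candidate: for each $w \in \widetilde{\Ar''}$ fix once and for all a subspace $y(w) \in \Ar'$ with $x_0 \cap y(w) = w$, and on basis elements set $\psi(\{w_1,\dotsc,w_r\}) = (-1)^r\{x_0, y(w_1), \dotsc, y(w_r)\}$ (the $y(w_i)$ are pairwise distinct because the $w_i$ are, so the right-hand side is a genuine basis element of $D(\Ar)$; in particular $\psi(\emptyset) = \{x_0\}$). Then $\phi\, \psi = \id$ is immediate from the defining formula of $\phi$, because $x_0 \cap y(w_i) = x_0 \cap y(w_j)$ is impossible for $i \neq j$, so no pair from $E$ occurs among the $y(w_i)$ and the two sign factors $(-1)^r$ cancel. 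Moreover $\psi$ raises degrees by $\deg(x_0)$ --- matching the shift of $\bar\phi$ recorded in Theorem~\ref{theo:lesic} --- by the elementary identity $\codim_{\C^l} w = \codim_{x_0} w + \codim_{\C^l} x_0$ applied to $w = w_1 \cap \dotsb \cap w_r = x_0 \cap y(w_1) \cap \dotsb \cap y(w_r)$.

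The crux is that $\psi$ is a cochain map, and here the separator hypothesis enters. Writing $y_i = y(w_i)$ and using $x_0 \cap y_i = w_i$ (so that $x_0 \cap y_1 \cap \dotsb \cap \widehat{y_j} \cap \dotsb \cap y_r = w_1 \cap \dotsb \cap \widehat{w_j} \cap \dotsb \cap w_r$ for each $j$), one checks that the terms of $d\{x_0, y_1, \dotsc, y_r\}$ obtained by deleting some $y_j$ are, term by term and with the correct signs ($w_j$ sits at position $j$ in $\tau$ while $y_j$ sits at position $j+1$ in $\{x_0, y_1, \dotsc, y_r\}$, which turns the $(-1)^{r-1}$ coming from $\psi$ into the $(-1)^r$ in front of $d\{x_0, y_1, \dotsc, y_r\}$), precisely the images under $\psi$ of the terms of $d\tau$ --- their condition of appearance, $\vee(\tau\setminus\{w_j\}) = \vee\tau$, being literally the same on both sides. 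The only possible extra term is the one obtained by deleting $x_0$, namely $\pm\{y_1,\dotsc,y_r\}$, and it occurs exactly when $y_1 \cap \dotsb \cap y_r \subseteq x_0$. But $\{y_1, \dotsc, y_r\} \subseteq \{x_1, \dotsc, x_n\}$, hence $y_1 \cap \dotsb \cap y_r \supseteq x_1 \cap \dotsb \cap x_n$; since $x_0$ is a separator, $x_1 \cap \dotsb \cap x_n \not\subseteq x_0$, so $y_1 \cap \dotsb \cap y_r$ contains a point outside $x_0$ and is therefore not contained in $x_0$. Thus the extra term never occurs, $d\psi = \psi d$, and $\psi$ is the required cochain section.

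The rest is formal: $\phi\, \psi = \id$ with $\psi$ and $\phi$ cochain maps forces $\phi$, and hence (via the quasi-isomorphism $\bar\phi$) also $\pi$, to be surjective in cohomology in every degree; by exactness, every connecting map of the long exact sequence of Theorem~\ref{theo:lesic} vanishes, the sequence splits, and the Poincar\'e polynomial identity follows as indicated in the first paragraph. I expect the only real work to be the verification in the third paragraph --- matching the two differentials term by term, keeping the signs straight, and isolating exactly the ``delete-$x_0$'' term that the separator condition annihilates; the remainder merely repackages Proposition~\ref{prop:phiquasiiso}, Corollary~\ref{cor:artilde}, and the short exact sequence $0 \to D(\Ar') \to D(\Ar) \to D(\Ar)/D(\Ar') \to 0$.
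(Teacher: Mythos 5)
Your proof is correct, but it takes a genuinely different (dual) route from the paper's. The paper splits the long exact sequence from the \emph{left}: it defines the obvious retraction $k \de D(\Ar) \to D(\Ar')$ of the inclusion $j$ by $k(\sigma) = 0$ if $x_0 \in \sigma$ and $k(\sigma) = \sigma$ otherwise, and uses the separator hypothesis only to check that $k$ is a cochain map --- the sole term in $kd(\sigma) - dk(\sigma)$ is $\pm(\sigma \setminus \{x_0\})$, whose coefficient requires $\cap_{y \in \sigma'} y \subseteq x_0$, impossible since $\cap_{y \in \sigma'} y \supseteq x_{\{1,\dotsc,n\}} \not\subseteq x_0$. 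Then $k^\star j^\star = \id$ forces $j^\star$ injective and the sequence splits. You instead split from the \emph{right}, constructing a cochain section $\psi$ of $\phi$, with the separator hypothesis entering at the analogous point (killing the delete-$x_0$ term of $d\psi(\tau)$, via the same inclusion $y_1 \cap \dotsb \cap y_r \supseteq x_{\{1,\dotsc,n\}} \not\subseteq x_0$). Both work; the paper's $k$ is somewhat cheaper --- it needs no choices, no compatible ordering of $\widetilde{\Ar''}$ with $\Ar'$, and its sign-check is one line, whereas your $\psi$ implicitly requires the linear order on $\widetilde{\Ar''}$ to be the one pulled back along $w \mapsto y(w)$ (otherwise the position-$j$-versus-position-$(j{+}1)$ sign comparison in your third paragraph would not go through as stated). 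On the other hand, your $\psi$ gives an explicit cochain-level section of the surjection, and your degree computation $\codim_{\C^l} w = \codim_{x_0} w + \codim_{\C^l} x_0$ gives an independent confirmation of the shift $\deg(x_0)$ in Theorem~\ref{theo:lesic}, which the paper's $k$ does not touch.
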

\begin{proof}
To prove that $x_0$ has the PP property, it is sufficient to show that the injective map $j \de D(\Ar') \to D(\Ar)$ induces an injective map $j^\star \de H^\star(D(\Ar')) \to H^\star(D(\Ar))$, since, in that case, the long exact sequence of theorem~\ref{theo:lesic} splits into short exact sequences \[
0 \to H^q(M(\Ar'), \Q) \to H^q(M(\Ar), \Q) \to \\ H^{q-\deg(x_0)} (M(\Ar''), \Q) \to  0.
\]
Let's define a map $k \de D(\Ar) \to D(\Ar')$ of cochain complexes by $k(\sigma) = 0$ if $x_0 \in \sigma$ and $k(\sigma) = \sigma$ if $x_0 \not\in \sigma$.  This map commutes with the differential~:
\begin{itemize}
\item If $x_0 \not\in \sigma$, then $dk(\sigma) = kd(\sigma)$,
\item if $x_0 \in \sigma$, then $dk(\sigma) = 0$ and $kd(\sigma) = \alpha (\sigma \setminus \{x_0 \})$, with $\alpha = 0$ if and only if $\cap_{x \in \Ar} x \neq \cap_{y \in \Ar'} y$, which is the case.
\end{itemize}
So, the map $k$ is a map of chain complexes and induces a map $k^\star \de H^\star(D(\Ar)) \to H^\star(D\Ar'))$. Obviously, we have $k^\star j^\star = \id$, which implies that $j^\star$ is injective.  Hence $x_0$ has the PP property.
\end{proof}

In the example~1, we can use the proposition~\ref{prop:sep} to check that $h_1$ and $h_2$ are separators. But question 2 is not completely answered~: the following example is an arrangement $\Br$ which has the PP property with respect to $h_0$. However, $h_0$ is not a separator.

\emph{Example 2.} In $\C^3$, let's consider $\Br = \{h_0, h_1, h_2\}$ where $h_0, h_1$ and $h_2$ are the three lines defined by the equations $z_2 = z_3 = 0$, $z_1 = z_3 = 0$ and $z_1 = z_2 = 0$, respectively. A basis for $D(\Br)$ is $1$ in degree 0, $X_i = \{h_i\}$ in degree 3, $X_{ij} = \{h_i, h_j\}$ in degree 4 and $X_{012}$ in degree 3, with $d1 = dX_i = dX_{ij} = 0$ and $dX_{012} = -X_{12} + X_{02} - X_{01}$.  From that, it is easy to compute 
\begin{align*}
\Poin(M(\Br), t) &= 1 + 3t^3 + 2t^4 = 1 + 2t^3 + t^4 + t^3(1 + t) \\
&= \Poin(M(\Br'), t) + t^{\deg(h_0)} \Poin(M(\Br''), t).
\end{align*}

\section{Arrangements with a geometric lattice} \label{sec:agl}

When we suppose that the arrangement $\Ar$ has a geometric lattice, we have some stronger results. These results come from the following lemma.
\begin{lemma} \label{lem:prep}
Let $\Ar = \{x_0, \dotsc, x_n\}$ be an arrangement with a geometric lattice such that $x_0$ is a separator. Let $\{i_1, \dotsc, i_{r+1}\} \subset \{1, \dotsc, n\}$. Then we have \[
x_0 \cap x_{\{i_1, \dotsc, i_r\}} = x_0 \cap x_{\{i_1, \dotsc, i_{r+1}\}} \text{ if and only if } x_{\{i_1, \dotsc, i_r\}} = x_{\{i_1, \dotsc, i_{r+1}\}}.
\]
\end{lemma}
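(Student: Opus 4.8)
The plan is to exploit the defining property of a geometric lattice, namely that it is semimodular: for $a, b \in L(\Ar)$ one has $\rk(a \vee b) + \rk(a \wedge b) \le \rk(a) + \rk(b)$ (equivalently, covering relations behave well under join). The ``if'' direction is trivial, so the whole content is the ``only if'' direction: assuming $x_0 \cap x_{\{i_1,\dotsc,i_r\}} = x_0 \cap x_{\{i_1,\dotsc,i_{r+1}\}}$, we must conclude $x_{\{i_1,\dotsc,i_r\}} = x_{\{i_1,\dotsc,i_{r+1}\}}$. Write $a = x_{\{i_1,\dotsc,i_r\}}$ and $b = x_{i_{r+1}}$, so that the hypothesis reads $x_0 \wedge a = x_0 \wedge (a \wedge b)$ in $L(\Ar)$ (here $\wedge$ is the meet of the lattice, i.e.\ the intersection-closure), and we want $a = a \wedge b$, i.e.\ $a \le b$ in the reverse-inclusion order, i.e.\ $a \subseteq b$ as subspaces.

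The key step is a rank count. Since $x_0$ is a separator, $x_0 \wedge x_{\{1,\dotsc,n\}} > x_{\{1,\dotsc,n\}}$ strictly; I would first argue that separator-ness propagates, i.e.\ that $x_0 \wedge c > c$ (strict) for \emph{every} $c$ in the sublattice generated by $x_1, \dotsc, x_n$ — intuitively, if adjoining $x_0$ failed to drop the rank at some intermediate $c$, then by semimodularity it could not drop the rank at the bottom element $x_{\{1,\dotsc,n\}}$ either. Granting this, $\rk(x_0 \wedge c) = \rk(c) + 1$ would in fact hold for all such $c$ (the increment is exactly one, again by semimodularity applied to the covering pair $x_0 \wedge c$ over... — more precisely, $x_0 \wedge c$ covers $c$ when the rank jump is forced to be minimal). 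Then from $x_0 \wedge a = x_0 \wedge (a\wedge b)$ we get $\rk(a) + 1 = \rk(x_0 \wedge a) = \rk(x_0 \wedge (a \wedge b)) = \rk(a \wedge b) + 1$, hence $\rk(a) = \rk(a \wedge b)$; since $a \wedge b \le a$ and they have equal rank, $a = a \wedge b$, which is exactly $x_{\{i_1,\dotsc,i_r\}} = x_{\{i_1,\dotsc,i_{r+1}\}}$.

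I expect the main obstacle to be the propagation claim ``$x_0$ a separator $\Rightarrow$ $x_0 \wedge c > c$ for all $c$ in the lower sublattice,'' together with the sharpening that the rank always jumps by exactly one. The clean way to handle it is: first show $\rk(x_0 \wedge x_{\{1,\dotsc,n\}}) = \rk(x_{\{1,\dotsc,n\}}) + 1$ — the jump is at least one by hypothesis, and at most one because $x_0$ contributes at most $\rk(x_0) - \rk(x_0 \wedge(\text{everything}))$... more carefully, because in a geometric lattice $x_0 \vee c$ covers $c$ whenever $x_0 \not\le c$ but $x_0$ is an atom-join that adds a single ``new direction'' relative to $c$; here one uses that $x_0 \wedge x_{\{1,\dotsc,n\}}$ already differs from $x_{\{1,\dotsc,n\}}$ only minimally because $x_{\{1,\dotsc,n\}}$ is the minimum of the lower sublattice. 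Then, for general $c \ge x_{\{1,\dotsc,n\}}$ in that sublattice, semimodularity applied to the pair $(x_0\wedge x_{\{1,\dotsc,n\}},\, c)$ — whose join is $x_0 \wedge c$ and whose meet is $x_{\{1,\dotsc,n\}}$... (one checks $(x_0 \wedge x_{\{1,\dotsc,n\}}) \wedge c = x_{\{1,\dotsc,n\}}$ since $x_0 \wedge x_{\{1,\dotsc,n\}}$ is not above $x_{\{1,\dotsc,n\}}$'s relevant part) — forces $\rk(x_0 \wedge c) \le \rk(x_0 \wedge x_{\{1,\dotsc,n\}}) + \rk(c) - \rk(x_{\{1,\dotsc,n\}}) = \rk(c) + 1$, and $\rk(x_0 \wedge c) \ge \rk(c)+1$ since $x_0 \wedge c \ge x_0 \wedge x_{\{1,\dotsc,n\}} > x_{\{1,\dotsc,n\}}$ forces strictness above $c$ as well (if $x_0 \wedge c = c$ then $c \ge x_0 \wedge x_{\{1,\dotsc,n\}}$, contradicting that the latter is not $\ge x_{\{1,\dotsc,n\}}$ in the right way). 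Getting these meet/join identities in $L(\Ar)$ exactly right — remembering that join is honest intersection but meet is intersection-closure — is the delicate bookkeeping; everything else is the one-line rank computation of the previous paragraph.
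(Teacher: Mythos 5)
Your overall plan is the same as the paper's: exploit that $x_0$ is an atom of the geometric lattice $L(\Ar)$, so that joining $x_0$ to any element $c$ not above it produces a cover of $c$ (equivalently, raises the rank by exactly one); then the hypothesis forces $\rk\bigl(x_{\{i_1,\dotsc,i_r\}}\bigr) = \rk\bigl(x_{\{i_1,\dotsc,i_{r+1}\}}\bigr)$, which with the strict monotonicity of rank gives the conclusion. The paper phrases the covering-transport step via the Jordan--H\"older property (``all maximal chains between two elements have the same length''); you phrase it via the rank-function form of semimodularity. These are interchangeable, so this is essentially the paper's argument.

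However, as written your proposal has a genuine inversion that would derail it if taken literally. You repeatedly write $x_0 \wedge c$ for the set-theoretic intersection $x_0 \cap c$ and even gloss $\wedge$ as ``the meet of the lattice, i.e.\ the intersection-closure,'' and you assert ``$a \le b$ in the reverse-inclusion order, i.e.\ $a \subseteq b$.'' Both are backwards: in the paper's convention the join $\vee$ is honest intersection, the meet $\wedge$ is the intersection-closed hull, and $a \le b$ means $a \supseteq b$. This matters because the semimodular inequality $\rk(a \vee b) + \rk(a \wedge b) \le \rk(a) + \rk(b)$ is not self-dual; applied with the operations swapped it is simply false. Concretely, your middle step --- applying semimodularity to the pair $\bigl(x_0 \cap x_{\{1,\dotsc,n\}},\, c\bigr)$ and asserting its join is $x_0 \cap c$ and its meet is $x_{\{1,\dotsc,n\}}$ --- is wrong: since $c \supseteq x_{\{1,\dotsc,n\}}$, the join of that pair is $x_0 \cap x_{\{1,\dotsc,n\}} \cap c = x_0 \cap x_{\{1,\dotsc,n\}}$, not $x_0 \cap c$, and the resulting inequality is vacuous. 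The correct and much shorter route is the one you mention in passing and then abandon: $x_0$ is an atom of $L(\Ar)$ (by the no-containment hypothesis on $\Ar$), and $x_0 \not\le c$ for any $c$ arising as an intersection of the $x_i$, $i \ge 1$, by the separator condition; hence $x_0 \wedge c = \hat 0$, and semimodularity applied to the pair $(x_0, c)$ gives $\rk(x_0 \vee c) \le \rk(c) + 1$, while the lower bound $\rk(x_0 \vee c) \ge \rk(c) + 1$ is immediate from $x_0 \vee c > c$. With that repair, your rank count goes through and matches the paper's proof.
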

\begin{proof}
If $x_{\{i_1, \dotsc, i_r\}} = x_{\{i_1, \dotsc, i_{r+1}\}}$, then obviously, we have $x_0 \cap x_{\{i_1, \dotsc, i_r\}} = x_0 \cap x_{\{i_1, \dotsc, i_{r+1}\}}$. 

Suppose that $x_0 \cap x_{\{i_1, \dotsc, i_r\}} = x_0 \cap x_{\{i_1, \dotsc, i_{r+1}\}}$. From $x_0 \cap x_{\{1, \dotsc, n\}}  \neq x_{\{1, \dotsc, n\}}$, we deduce that $x_0 \cap x_{\{i_1, \dotsc, i_r\}}  \neq x_{\{i_1, \dotsc, i_r\}}$.  Since the lattice is geometric, we obtain the maximal chain $x_{\{i_1, \dotsc, i_r\}} < x_0 \cap x_{\{i_1, \dotsc, i_r\}}$ from the maximal chain $\C^l < x_0$. Therefore, we have the two following chains of inequalities~: 
\begin{align*}
x_{\{i_1, \dotsc, i_r\}}  < x_0 \cap x_{\{i_1, \dotsc, i_r\}} & = x_0 \cap x_{\{i_1, \dotsc, i_{r+1}\}}, \\
x_{\{i_1, \dotsc, i_r\}}  \leq x_{\{i_1, \dotsc, i_{r+1}\}} & < x_0 \cap x_{\{i_1, \dotsc, i_{r+1}\}}.
\end{align*}
Since the lattice is geometric, all maximal chains between two elements have the same length. The first line is a maximal chain of length 2, so the second line has the same length and we have the equality $x_{\{i_1, \dotsc, i_r\}} = x_{\{i_1, \dotsc, i_{r+1}\}}$.
\end{proof}

Suppose that the arrangement $\Ar$ satisfies the assumptions of lemma~\ref{lem:prep} and let's define a map $\theta \de D(\Ar') \to D(\widetilde{\Ar''})$ by sending $\{x_{i_1}, \dotsc, x_{i_r}\}$ to $\{x_0 \cap x_{i_1}, \dotsc, x_0 \cap x_r\}$. This map is a bijection~: if $x_0 \cap x_i = x_0 \cap x_j$, then $x_0 \cap x_i = x_0 \cap x_{\{i,j\}}$, and by lemma~\ref{lem:prep}, $x_i = x_{\{i,j\}}$, which is impossible. 

Lemma~\ref{lem:prep} implies that $\theta$ commutes with the differential. So, the induced map in cohomology $H^\star \theta \de H^\star(M(\Ar'), \Q) \to H^\star(M(\Ar''), \Q)$ is an isomorphism of vector spaces.

But this map $H^\star \theta$ does not preserve the multiplication, or even the grading. A simple computation shows that the element $x = \{x_{i_1}, \dotsc, x_{i_r}\}$ of degree $\deg(x)$ (in $H^\star(M(\Ar'), \Q)$) is sent to $\{x_0 \cap x_{i_1}, \dotsc, x_0 \cap x_{i_r}\}$, which has a degree $\deg(x) - 2\codim(x_0+x_{\{i_1, \dotsc, i_r\}})$ (in $H^\star(M(\Ar''), \Q)$). So, it decreases the degree by an even number. This proves the following proposition~: 
\begin{proposition}
Let $\Ar = \{x_0, \dotsc, x_n\}$ be an arrangement with a geometric lattice such that $x_0$ is a separator. Then we have the following relations~:
\begin{align*}
&\sum_{i=0}^\infty b_{2i}(M(\Ar')) = \sum_{i=0}^\infty b_{2i}(M(\Ar'')),  && \sum_{i=0}^\infty b_{2i+1}(M(\Ar')) = \sum_{i=0}^\infty b_{2i+1}(M(\Ar'')), \\
&\sum_{i=0}^\infty b_{i}(M(\Ar')) = \sum_{i=0}^\infty b_{i}(M(\Ar'')), && \forall n \in \N, \sum_{i=0}^n b_{i}(M(\Ar'))  \leq \sum_{i=0}^n b_{i}(M(\Ar'')).
\end{align*}
\end{proposition}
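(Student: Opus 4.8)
The plan is to deduce the four relations from the map $\theta\de D(\Ar')\to D(\widetilde{\Ar''})$ introduced just above. Recall that its induced map $H^\star\theta\de H^\star(M(\Ar'),\Q)\to H^\star(M(\Ar''),\Q)$ is already known to be an isomorphism of $\Q$-vector spaces; here I use corollary~\ref{cor:artilde} to identify $H^\star(D(\widetilde{\Ar''}))$ with $H^\star(M(\Ar''),\Q)$, noting that the quasi-isomorphism $D(\Ar'')\hookrightarrow D(\widetilde{\Ar''})$ is an inclusion of graded complexes and so respects the grading. As in corollary~\ref{cor:euler}, I will use the universal coefficient theorem to write $b_i(M(\cdot))=\dim H^i(M(\cdot),\Q)$ throughout.

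The first real step is to sharpen the statement that $H^\star\theta$ ``decreases the degree by an even number'' into the following: $H^\star\theta$ carries $H^q(M(\Ar'),\Q)$ into $\bigoplus H^{q'}(M(\Ar''),\Q)$, the sum ranging over $q'\le q$ with $q'\equiv q\pmod 2$. On basis elements this is transparent, since $\theta$ sends a homogeneous basis element $\sigma\subseteq\Ar'$ of degree $\deg\sigma$ to the homogeneous basis element $\theta(\sigma)$ of degree $\deg\sigma-2\codim(x_0+\vee\sigma)$, which is at most $\deg\sigma$ and of the same parity. I would then promote this to cohomology by noting that if $z$ is a homogeneous cocycle of degree $q$ in $D(\Ar')$, then $\theta(z)$ is a cocycle whose homogeneous components all lie in degrees $\le q$ of parity $q$; since the differential of $D(\widetilde{\Ar''})$ is graded, each of those components is itself a cocycle, so the class $[\theta(z)]$ decomposes accordingly.

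Granting this, the relations fall out by counting dimensions of graded vector spaces. Since $H^\star\theta$ is a bijection preserving the parity of the degree, it restricts to bijections $\bigoplus_{i\ge0}H^{2i}(M(\Ar'),\Q)\cong\bigoplus_{i\ge0}H^{2i}(M(\Ar''),\Q)$ and, likewise, between the odd-degree parts; comparing dimensions gives the first two equalities, and their sum is $\sum_i b_i(M(\Ar'))=\sum_i b_i(M(\Ar''))$. For the last relation, the sharpened statement shows that $H^\star\theta$ restricts to an injection $\bigoplus_{i=0}^{n}H^i(M(\Ar'),\Q)\hookrightarrow\bigoplus_{i=0}^{n}H^i(M(\Ar''),\Q)$ for each $n$, whence $\sum_{i=0}^{n}b_i(M(\Ar'))\le\sum_{i=0}^{n}b_i(M(\Ar''))$.

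I expect the only delicate point to be the second paragraph, namely checking that the filtration- and parity-compatibility visible on the chosen bases survives the passage to cohomology; this is exactly where one needs $D(\widetilde{\Ar''})$ to be an honestly graded complex and the comparison map of corollary~\ref{cor:artilde} to be degree-preserving. Everything after that is routine linear algebra.
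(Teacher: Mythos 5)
Your proof is correct and it takes essentially the same route as the paper: the proposition is meant to follow directly from the two paragraphs preceding it, where $\theta$ is shown to be a chain isomorphism that on each basis element lowers degree by the even nonnegative amount $2\codim(x_0 + \vee\sigma)$, and you have simply spelled out the passage to cohomology (parity-preservation of $H^\star\theta$, well-definedness and injectivity of the degree-filtered restriction, degree-preservation of the comparison map from Corollary~\ref{cor:artilde}) that the paper leaves implicit.
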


Finally, when the subspace $x_0$ is an hyperplane, we have a stronger result. 
\begin{proposition}
Let $\Ar = \{x_0, \dotsc, x_n\}$ be an arrangement with a geometric lattice such that $x_0$ is an hyperplane and a separator, then the graded vector spaces $H^\star(M(\Ar'), \Q)$ and $H^\star(M(\Ar''), \Q)$ are isomorphic and 
\begin{align*}
\Poin(M(\Ar'), t) &= \Poin(M(\Ar''), t),\\
\Poin(M(\Ar), t) &= (1+t) \Poin(M(\Ar''), t). 
\end{align*}
\end{proposition}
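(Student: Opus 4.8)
The plan is to add the hyperplane hypothesis to what Section~\ref{sec:agl} already provides. First, since $x_0$ is a separator, Proposition~\ref{prop:sep} shows that $x_0$ has the PP property, and since $x_0$ is a hyperplane we have $\deg(x_0) = 2\codim x_0 - 1 = 1$, so
\[
\Poin(M(\Ar),t) = \Poin(M(\Ar'),t) + t\,\Poin(M(\Ar''),t).
\]
Consequently both displayed identities of the proposition reduce to a single claim: the graded vector spaces $H^\star(M(\Ar'),\Q)$ and $H^\star(M(\Ar''),\Q)$ are isomorphic. Indeed, such an isomorphism gives $\Poin(M(\Ar'),t) = \Poin(M(\Ar''),t)$, and substituting this into the formula above produces the factor $1+t$.

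To obtain the graded isomorphism I would revisit the map $\theta \de D(\Ar') \to D(\widetilde{\Ar''})$ from Section~\ref{sec:agl}. Under the standing hypotheses (geometric lattice, $x_0$ a separator) we already know that $\theta$ is a bijection commuting with the differential, so $H^\star\theta$ is an isomorphism of vector spaces, and that $\theta$ sends the generator $\{x_{i_1},\dots,x_{i_r}\}$ to a class whose degree is lowered by $2\codim\bigl(x_0 + x_{\{i_1,\dots,i_r\}}\bigr)$. Combining this with Corollary~\ref{cor:artilde}, it suffices to check that when $x_0$ is a hyperplane this shift vanishes, i.e. that $x_0 + x_{\{i_1,\dots,i_r\}}$ has codimension $0$ for every nonempty $\{i_1,\dots,i_r\} \subseteq \{1,\dots,n\}$.

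This last point is where the two hypotheses interact. The separator condition forces $x_{\{i_1,\dots,i_r\}} \not\subseteq x_0$ for every nonempty subset of $\{1,\dots,n\}$: were $x_I \subseteq x_0$ for such an $I$, then
\[
x_0 \cap x_{\{1,\dots,n\}} = (x_0 \cap x_I) \cap x_{\{1,\dots,n\}\setminus I} = x_I \cap x_{\{1,\dots,n\}\setminus I} = x_{\{1,\dots,n\}},
\]
contradicting $x_0 \cap x_{\{1,\dots,n\}} > x_{\{1,\dots,n\}}$; this is exactly the deduction used inside the proof of Lemma~\ref{lem:prep}. Since $x_0$ is a hyperplane and $x_I \not\subseteq x_0$, we get $x_0 + x_I = \C^l$, of codimension $0$, so $\theta$ is degree-preserving; hence $H^\star\theta$ is an isomorphism of graded vector spaces and the proposition follows. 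I do not expect a serious obstacle here: the bijection $\theta$, its compatibility with $d$, the quasi-isomorphism identifying $D(\widetilde{\Ar''})$, and the PP property for separators are all already established, so the only genuinely new ingredient is the elementary remark that a separating hyperplane satisfies $x_0 + x_I = \C^l$ for every nonempty $I$.
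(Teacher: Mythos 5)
Your proposal is correct and follows essentially the same route as the paper: invoke Proposition~\ref{prop:sep} for the PP property, then show the map $H^\star\theta$ is degree-preserving because the degree shift $2\codim(x_0 + x_I)$ vanishes when $x_0$ is a separating hyperplane. You actually spell out more carefully than the paper why $x_I \not\subseteq x_0$ for nonempty $I \subseteq \{1,\dots,n\}$, which is a useful clarification of a step the paper leaves implicit.
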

\begin{proof}
Since $x_0$ is an hyperplane and $x_0 \cap x_{\{1, \dotsc, n\}}  \neq x_{\{1, \dotsc, n\}}$, we have $\codim(x_0+x_{\{i_1, \dotsc, i_r\}}) = 0$. Therefore, the map $H^\star \theta$ preserve the degree and is an isomorphism of graded vector spaces. This fact and proposition~\ref{prop:sep} implies directly the two equalities.
\end{proof}

\end{document}